\newtheorem{theorem}{Theorem}[section]
\newtheorem{remark}[theorem]{Remark}
\newtheorem{example}[theorem]{Example}
\newtheorem{lemma}{Lemma}[section]
\newtheorem{definition}{Definition}[section]
\newcommand{\ra}{\rightarrow}
\newcommand{\f}{\frac}
\newcommand{\g}{\gamma}
\newcommand{\be}{\begin{equation}}
\renewcommand{\ra}{\rightarrow}
\newcommand{\ee}{\end{equation}}
\newcommand{\bea}{\begin{eqnarray}}
\newcommand{\eea}{\end{eqnarray}}
\newcommand{\bna}{\begin{eqnarray*}}
\newcommand{\ena}{\end{eqnarray*}}
\renewcommand{\le}{\left}
\newcommand{\ri}{\right}
\journal{***}
\begin{document}

\begin{frontmatter}

\title{Calculus of variations on hypergraphs}

\author[qnu]{Mengqiu Shao}
\ead{mqshaomath@qfnu.edu.cn}
\address[qnu]{School of Mathematical Sciences, Qufu Normal University, Shandong, 273165, China}

\author[bnu]{Yulu Tian\corref{Tian}}
\ead{tianyl@mail.bnu.edu.cn}
\author[bnu]{Liang Zhao}
\ead{liangzhao@bnu.edu.cn}
\address[bnu]{School of Mathematical Sciences, Key Laboratory of Mathematics and Complex Systems of MOE,\\
	Beijing Normal University, Beijing, 100875, China}

\cortext[Tian]{Corresponding author.}

\begin{abstract}
  We have established a coherent framework for applying variational methods to partial differential equations on hypergraphs, which includes the propositions of calculus and function spaces on hypergraphs. Several results related to the maximum principle on hypergraphs have also been proven. As applications, we demonstrated how these can be used to study partial differential equations on hypergraphs.
\end{abstract}

\begin{keyword} Laplace operator; Variational methods on hypergraph; Maximum principle

\MSC[2020] 05C65, 35A15
\end{keyword}

\end{frontmatter}

\section{Introduction}

The hypergraph is an extension of the graph theory, where edges can connect more than two vertices, forming hyperedges. This sophisticated mathematical structure allows for the modeling of complex relationships that cannot be encapsulated by simple pairwise connections, making hypergraphs a powerful tool in various fields of science and engineering. In computational biology, they are used to model the intricate web of interactions between proteins \cite{fan2022identification, guan2022mvhrkm}, genes \cite{feng2020hypergraph}, and metabolic pathways \cite{krieger2023shortest,mithani2009rahnuma}. When social structures are represented with individuals as vertices and social ties as hyperedges, hypergraphs underpin the theoretical foundations of social network analysis \cite{yu2021selfsupervised,zheng2018novel}, which allows for a more nuanced view of social groups and collaborative activities. These applications of hypergraphs provide a versatile language for describing and analyzing complex systems that exhibit multi-way relationships. Their mathematical properties are continually being explored, leading to new insights and advancements in both theoretical and applied contexts.

Specifically, this paper attempts to define several fundamental calculus concepts on hypergraphs, including the gradient, divergence and Laplace operators. These concepts on hypergraphs have been studied by many mathematicians. For instance, a series of papers by Jost, Reff, Rusnak, etc. \cite{chen2018characterization, jost2019hypergraph, jost2021normalized, jost2022plaplace, kitouni2019lower, mulas2021spectral,  reff2014spectral, reff2012oriented} investigated various Laplace operators on oriented or chemical hypergraphs, where they primarily dedicated to exploring the relationship between the spectrum of Laplace operator and the structure of hypergraphs. The Laplace operator on hypergraphs and its spectrum have also been studied from an application standpoint,  and have been applied to machine learning problems on hypergraphs \cite{ fu2019hplapGCN, prokopchik2022nonlinear,saito2022hypergraph, saito2022generalizing, saito2018hypergraph}.

We plan to investigate partial differential equations on hypergraphs. Therefore, we adhere to the following three principles in this paper: 

\noindent$(1)$ Properties from classical calculus, such as the divergence theorem, are still applicable.

\noindent$(2)$ Significant properties in variational calculus, such as the maximum principle, remain valid. 

\noindent$(3)$ Concepts on hypergraphs should remain consistent with the concepts defined on graphs.

Compared to these existing works, the value of our work lies in establishing a reasonable analytical framework on hypergraphs, and under this framework, variational methods can be used to study several partial differential equations on hypergraphs. The structure of this paper is as follows: In Section \ref{basic}, we introduce the basic concepts and define the Laplace operator on hypergraphs. In Section \ref{principle}, we establish several maximum principles on hypergraphs. In Section \ref{pde}, we study several kinds of partial differential equations on hypergraphs using variational methods.

\section{Calculus on hypergraphs}\label{basic}

A directed hypergraph $H$ is a pair $(V,\vec{E})$, where $V$ is a finite set of vertices and $\vec{E}$ is the directed hyperedge set which is a subset of 
$
\overunderset{|V|}{k=2}\bigcup\underset{~\{v_1,\cdots,v_k\}\subset V}\bigcup\left\{\{v_{\sigma(1)},v_{\sigma(2)},\cdots,v_{\sigma(k)}\}, \sigma\in\mathcal{S}_k\right\}.$ 
Here $|V|$ denotes the cardinality of $V$ and $\mathcal{S}_k$ denotes the set of permutations of $\{1,2,\cdots,k\}$. For convenience, the directed hyperedge $\{v_{\sigma(1)},v_{\sigma(2)},\cdots,v_{\sigma(k)}\}$ is written as $\vec{e}_{\sigma(1)\sigma(2)\cdots \sigma(k)}$ and sometimes denoted by $\vec{e}$ for brevity. If a directed hypergraph $H$ satisfies that once $\vec{e}_{12\cdots k}\in \vec{E}$, then for any permutation $\sigma\in \mathcal{S}_k$, $\vec{e}_{\sigma(1)\sigma(2)\cdots\sigma(k)}$ is also in $\vec{E}$, then we call $H$ a symmetric directed hypergraph.

An undirected hypergraph is a hypergraph that does not distinguish the hyperedges $\vec{e}_{12\cdots k}$ and $\vec{e}_{\sigma(1)\sigma(2)\cdots \sigma(k)}$ and we simply denote its hyperedges as $e_{12\cdots k}$, or sometimes $e$, for brevity. There exists a natural correspondence between  a symmetric directed hypergraph and an undirected hypergraph through modulo permutation groups, that is, $E:=\vec{E}/ \mathcal{S}$, where $\mathcal{S}=\overunderset{|V|}{k=2}{\bigcup}\mathcal{S}_k$. From now on, we will not distinguish between an undirected hypergraph and its corresponding symmetric directed hypergraph. For two different vertices $v_i, v_j \in V$, if there is a sequence of hyperedges $\{e_1,e_2,\cdots,e_l\}$ such that $v_i\in e_1$, $v_j\in e_l$ and $e_n\cap e_{n+1}\neq\emptyset$ for $1\leq n\leq l-1$, then vertices $v_i$ and $v_j$ are connected by the hyperpath $\gamma=\{e_1,e_2,\cdots,e_l\}$. If each pair of two different vertices in a hypergraph $H$ can be connected by a hyperpath, we call $H$ a connected hypergraph. Going forward, unless otherwise noted, any mention of a hypergraph will refer to an undirected and connected hypergraph or its corresponding symmetric directed hypergraph. 

\begin{example}\label{exampleh4}
	Consider the following hypergraph $H_4$. The vertex set is $V=\{v_1,v_2,v_3,v_4\}$ and the hyperedge set is $E=\{e_{123},e_{14}\}$. If $H_4$ is regarded as a symmetric directed hypergraph, then its directed hyperedge set is $\vec{E}=\{\vec{e}_{123},\vec{e}_{132},\vec{e}_{213},\vec{e}_{231},\vec{e}_{312},\vec{e}_{321},\vec{e}_{14},\vec{e}_{41}\}.$ 

\begin{figure}[H]
\centering
\includegraphics[width=0.25\textwidth]{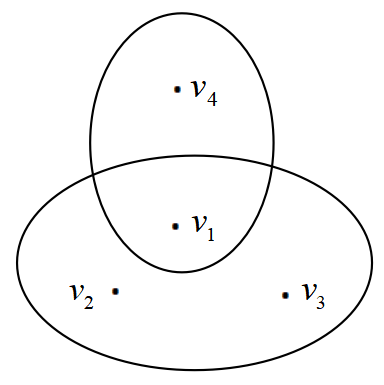}
\caption{The hypergraph $H_{4}$}
\label{figexampleh4}
\end{figure}	
\end{example}

\begin{remark}
	A hyperedge in an undirected hypergraph is typically viewed as a collection of vertices that is unordered and has no direction. However, when the context of the problem requires consideration of the orientation of hyperedges, there are several different reasonable choices. For example, in the papers by Jost and Mulas \cite{jost2019hypergraph, jost2021normalized}, with the context of chemical reactions, a chemical hypergraph is defined, which divides the vertices contained in a hyperedge into inputs (educts) and outputs (products), with the direction of the hyperedge going from inputs to outputs.
	
	In contrast, we consider a directed hyperedge as an ordered set of vertices, which is applicable when considering Markov processes of interacting particle systems on hypergraphs \cite{ kuehn2022vlasov, keliger2023accuracy, sridhar2023mean}. Moreover, our setting of ordered vertices in hyperedges can also be adopted when dealing with hypergraph cut \cite{saito2022hypergraph, saito2022generalizing, saito2018hypergraph}.
	
	A graph is a special kind of hypergraph. When we consider these different types of directed hyperedges in a graph, they are actually the same. They all become directed edges of the graph. From a mathematical perspective, we regard directed hyperedges as ordered sets of vertices, in order to ensure that the maximum principle on hypergraphs holds, which is, of course, important for analysis on hypergraphs.
\end{remark}

The measure $\mu$ on the vertex set is defined as a positive function $\mu: V\to\mathbb{R}^+$. The weight of the hyperedge set $E$ is associated with a weight function $\omega:E\to\mathbb{R}^+$. The degree of a hyperedge $e\in E$ is defined as $\delta_e:=|e|$, where $|e|$ denotes the cardinality of vertices contained in $e$, while the degree of its corresponding directed hyperedge $\vec{e}$ is denoted by $\delta_{\vec{e}}$ and then $\delta_{\vec{e}}=\delta_e=|e|$, since $e$ and $\vec{e}$ contain the same vertices.

Since there are a total of $\delta_e !$ directed hyperedges that contain the same vertices as $e$, with the order of vertices differing by a permutation $\sigma$, when calculating the degree of a vertex $v$, we assign the same weight to these directed hyperedges and multiply the weight of the hyperedge $e$ containing the vertex $v$ by $\delta_e !$. Therefore, the degree of a vertex $v\in V$ is defined as
\begin{equation*}
	d_v:=\sum_{e\in E: v\in e}\delta_e !\omega(e)=\sum_{\vec{e}\in \vec{E}: v\in\vec{e}}\omega(\vec{e}),
\end{equation*}
where $\omega(e)$ and $\omega(\vec{e})$ represent the weights of hyperedge $e\in E$ and its corresponding directed hyperedge $\vec{e}\in \vec{E}$, which are also denoted by $\omega_e$ and $\omega_{\vec{e}}$ for brevity.

The integral of a function $\phi$ over $V$ is
\begin{displaymath}
	\int_{V} \phi d\mu:=\sum_{v\in V} \mu(v)\phi(v).
\end{displaymath}
For any  $1\leq q<\infty$, we define $L^q(V)$ as the linear space of functions $\phi:V\rightarrow \mathbb{R} $ with the norm
$$
\Vert \phi\Vert_q:=\left(\int_{V}|\phi|^q d\mu\right)^{1/q}.
$$
While for $q=+\infty$, $L^\infty(V)$ is the space with the norm
$$
\Vert \phi\Vert_\infty:=\max_V |\phi|.
$$
In particular, $L^2(V)$ is a Hilbert space endowed with the inner product
\begin{equation}\label{inner1}
	\langle \phi,\psi\rangle_{V}:=\sum_{v\in V}\mu(v)\phi(v)\psi(v).
\end{equation}

For real-valued functions $\phi,\psi$ defined on the hyperedge set $E$, the Hilbert space $\mathcal{H}(E)$ is defined with the inner product
\begin{equation}\label{inner2}
	\langle \phi,\psi\rangle_{E}:=\sum_{e\in E}\phi(e)\psi(e).
\end{equation}
We can also calculate the inner product on the directed hyperedge set $\vec{E}$ corresponding to $E$,  that is,
\begin{equation}\label{inner3}
\langle \phi,\psi\rangle_{\vec{E}}:=\sum_{\vec{e}\in \vec{E}}\frac{1}{\delta_{\vec{e}} !}\phi(\vec{e})\psi(\vec{e}).
\end{equation}
The space $\mathcal{H}(\vec{E})$ defined with the inner product \eqref{inner3} is also a Hilbert space. If  $\phi(\vec{e}_{12\cdots k})=\phi(\vec{e}_{\sigma(1)\sigma(2)\cdots \sigma(k)})$ and $\psi(\vec{e}_{12\cdots k})=\psi(\vec{e}_{\sigma(1)\sigma(2)\cdots \sigma(k)})$ for any $\sigma\in \mathcal{S}_k$, then there holds $\langle \phi,\psi\rangle_{E}=\langle \phi,\psi\rangle_{\vec{E}}$. 

Next, let us define gradient and divergence operators on a hypergraph $H$.

\begin{definition}\label{grad1}
The hypergraph gradient operator $\nabla:L^2(V)\to\mathcal{H}(\vec{E})$ is defined as
\begin{equation*}
 \nabla \phi(\vec{e}):=\sqrt{\frac{\omega_{\vec{e}}}{\delta_{\vec{e}}-1}}\sum_{u\in\vec{e}}\left(\phi(u)-\phi(\vec{e}(1))\right),
\end{equation*}
where $\phi\in L^2(V)$ and $\vec{e}(1)$ is the first vertex of the directed hyperedge $\vec{e}$.
\end{definition}

\begin{remark}\label{re3.2}
	For two directed hyperedges $\vec{e}_i,\vec{e}_j\in \vec{E}$ containing the same vertices, if ${\vec{e}_i}(1)={\vec{e}_j}(1)=v$,
	then $\nabla \phi(\vec{e}_i)=\nabla \phi(\vec{e}_j)$, where ${\vec{e}_i}(1)$ and ${\vec{e}_j}(1)$ are the first vertex of these two hyperedges.
\end{remark}

\begin{remark}\label{grad2}
According to Definition \ref{grad1}, if there are totally $l_v$ hyperedges $e_1, e_2,\cdot\cdot\cdot, e_{l_v}$ such that $v\in e_i$, $i=1,2,\cdot\cdot\cdot, l_v$, then $\nabla \phi$ at any $v\in V$ is an $l_v$ dimensional vector as follows
\begin{align*}
 \nabla \phi(v)&= \left(\sqrt{\frac{\omega_{e_1}}{\mu(v)\delta_{e_1}(\delta_{e_1}-1)}}\sum_{u\in e_1}(\phi(u)-\phi(v)),
 \sqrt{\frac{\omega_{e_2}}{\mu(v)\delta_{e_2}(\delta_{e_2}-1)}}\sum_{u\in e_2}(\phi(u)-\phi(v)),\right.\\
 &\ \ \ \  \left.\cdots,
 \sqrt{\frac{\omega_{e_{l_v}}}{\mu(v)\delta_{e_{l_v}}(\delta_{e_{l_v}}-1)}}\sum_{u\in e_{l_v}}(\phi(u)-\phi(v))\right).
\end{align*}
\end{remark}

According to the Definition~\ref{grad1} and Remark~\ref{grad2}, we have the following conclusion.

\begin{lemma}\label{grad3}
For any $\phi, \psi\in L^2(V)$, there holds
\begin{equation*}
 \langle\nabla \phi,\nabla \psi\rangle_{\vec{E}}=\sum_{v\in V}\mu(v)\nabla \phi(v)\nabla\psi(v)=\int_{V}\nabla\phi\nabla\psi d\mu.
\end{equation*}
In particular, we have
\begin{equation*}
  \|\nabla\phi\|_{\vec{E}}^2=\int_{V}\vert\nabla\phi\vert^2d\mu,
\end{equation*}
where $\|\nabla\phi\|_{\vec{E}}^2:=\langle\nabla \phi,\nabla \phi\rangle_{\vec{E}}$.
\end{lemma}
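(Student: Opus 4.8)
The plan is to prove the first identity $\langle\nabla\phi,\nabla\psi\rangle_{\vec E}=\sum_{v\in V}\mu(v)\nabla\phi(v)\nabla\psi(v)$ by unwinding both sides into explicit sums over directed hyperedges and vertices, and then checking that the terms match. The second and third identities then follow immediately: the integral expression is just the definition of $\int_V\,\cdot\,d\mu$ applied to the function $v\mapsto\nabla\phi(v)\nabla\psi(v)$, and the $\|\nabla\phi\|_{\vec E}^2$ statement is the special case $\psi=\phi$ together with the observation that $\nabla\phi(v)\nabla\phi(v)=|\nabla\phi(v)|^2$ is the squared Euclidean norm of the $l_v$-dimensional vector from Remark~\ref{grad2}.

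For the main identity I would start from the right-hand side. By Remark~\ref{grad2}, for each $v$ the vector $\nabla\phi(v)$ has components indexed by the hyperedges $e\ni v$, the $e$-component being $\sqrt{\omega_e/(\mu(v)\delta_e(\delta_e-1))}\sum_{u\in e}(\phi(u)-\phi(v))$. Taking the dot product with the corresponding vector for $\psi$ and multiplying by $\mu(v)$, the factor $\mu(v)$ cancels the $1/\mu(v)$ inside the square roots, so
\[
\mu(v)\nabla\phi(v)\nabla\psi(v)=\sum_{e\ni v}\frac{\omega_e}{\delta_e(\delta_e-1)}\Bigl(\sum_{u\in e}(\phi(u)-\phi(v))\Bigr)\Bigl(\sum_{u\in e}(\psi(u)-\psi(v))\Bigr).
\]
Summing over $v\in V$ and interchanging the order of summation gives $\sum_{e\in E}\frac{\omega_e}{\delta_e(\delta_e-1)}\sum_{v\in e}\bigl(\sum_{u\in e}(\phi(u)-\phi(v))\bigr)\bigl(\sum_{u\in e}(\psi(u)-\psi(v))\bigr)$.

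Next I would expand the left-hand side. By \eqref{inner3}, $\langle\nabla\phi,\nabla\psi\rangle_{\vec E}=\sum_{\vec e\in\vec E}\frac{1}{\delta_{\vec e}!}\nabla\phi(\vec e)\nabla\psi(\vec e)$. Group the directed hyperedges $\vec e$ by the underlying undirected hyperedge $e$; there are $\delta_e!$ of them, and by Remark~\ref{re3.2} the value $\nabla\phi(\vec e)$ depends only on the first vertex $\vec e(1)$. Among the $\delta_e!$ orderings of a $\delta_e$-element set, exactly $(\delta_e-1)!$ have a prescribed first vertex $v$, so
\[
\sum_{\vec e\in\vec E}\frac{1}{\delta_{\vec e}!}\nabla\phi(\vec e)\nabla\psi(\vec e)=\sum_{e\in E}\frac{(\delta_e-1)!}{\delta_e!}\sum_{v\in e}\sqrt{\tfrac{\omega_e}{\delta_e-1}}\sqrt{\tfrac{\omega_e}{\delta_e-1}}\Bigl(\sum_{u\in e}(\phi(u)-\phi(v))\Bigr)\Bigl(\sum_{u\in e}(\psi(u)-\psi(v))\Bigr),
\]
and $(\delta_e-1)!/\delta_e!=1/\delta_e$ while the product of square roots is $\omega_e/(\delta_e-1)$, yielding exactly the same expression obtained from the right-hand side. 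This completes the proof. The only mildly delicate point — and the step I would be most careful about — is the bookkeeping of multiplicities: correctly counting that $(\delta_e-1)!$ directed versions of $e$ share each first vertex $v$, and tracking how the $1/\delta_{\vec e}!$ in the inner product on $\vec E$, the $1/\mu(v)$ inside the gradient components, and the $\omega_e/(\delta_e-1)$ normalization all combine. Everything else is routine rearrangement of finite sums.
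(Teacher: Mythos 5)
Your proposal is correct and follows essentially the same route as the paper's proof: both rest on the observation (via Remark~\ref{re3.2}) that $\nabla\phi(\vec e)$ depends only on the underlying hyperedge $e$ and the first vertex $\vec e(1)$, so that grouping the $\delta_e!$ directed versions of $e$ into $\delta_e$ classes of size $(\delta_e-1)!$ turns the $1/\delta_{\vec e}!$ weight into $1/\delta_e$ and produces the common expression $\sum_{e\in E}\frac{\omega_e}{\delta_e(\delta_e-1)}\sum_{v\in e}\bigl[\sum_{u\in e}(\phi(u)-\phi(v))\bigr]\bigl[\sum_{u\in e}(\psi(u)-\psi(v))\bigr]$. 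The only cosmetic difference is that you expand both sides and meet in the middle, whereas the paper writes a single chain of equalities from the left-hand side to the right.
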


\begin{proof}
By Remark~\ref{re3.2} and direct calculations, we have
	\begin{align*}
\langle\nabla\phi,\nabla\psi\rangle_{\vec{E}}
&=\sum_{\vec{e}\in\vec{E}}\frac{1}{\delta_{\vec{e}}!}\nabla\phi(\vec{e})\nabla\psi(\vec{e})\\
&=\sum_{\vec{e}\in\vec{E}}\frac{1}{\delta_{\vec{e}}!}\frac{\omega_{\vec{e}}}{(\delta_{\vec{e}}-1)}
	\left[\sum_{u\in\vec{e}}(\phi(u)-\phi(\vec{e}(1)))\right]
	\left[\sum_{u\in\vec{e}}(\psi(u)-\psi(\vec{e}(1)))\right]\\
&=\sum_{e\in E}\frac{\omega_e}{\delta_e!(\delta_e-1)}\sum_{v\in e}\left[\sum_{u\in e}
(\phi(u)-\phi(v))\right]\left[\sum_{u\in e}(\psi(u)-\psi(v))\right](\delta_e-1)!\\
&=\sum_{e\in E}\frac{\omega_e}{\delta_e(\delta_e-1)}\sum_{v\in e}\left[\sum_{u\in e}(\phi(u)-\phi(v))\right]
\left[\sum_{u\in e}(\psi(u)-\psi(v))\right]\\
&=\sum_{v\in V}\sum_{e\in E: v\in e}\frac{\omega_e}{\delta_e(\delta_e-1)}\left[\sum_{u\in e}(\phi(u)-\phi(v))\right]
\left[\sum_{u\in e}(\psi(u)-\psi(v))\right]\\
&=\sum_{v\in V}\mu(v)\frac{1}{\mu(v)}\sum_{e\in E:v\in e}\frac{\omega_e}{\delta_e(\delta_e-1)}
\left[\sum_{u\in e}(\phi(u)-\phi(v))\right]
\left[\sum_{u\in e}(\psi(u)-\psi(v))\right]\\
&=\underset{v\in V}\sum\mu(v)\nabla\phi(v)\nabla\psi(v)\\
&=\int_{V}\nabla\phi\nabla\psi d\mu.
\end{align*}
\end{proof}

Just as in Euclidean space and on manifolds, the divergence on a hypergraph is the adjoint operator of the gradient, and from this perspective, it is uniquely determined.

\begin{lemma}\label{div}
The divergence operator $\emph{\text{div}}$ on the hypergraph maps from $\mathcal{H}(\vec{E})$ to $L^2(V)$ and $\forall\psi\in L^2(V)$, $\forall\phi\in\mathcal{H}(\vec{E})$, it satisfies
\begin{equation}\label{div1}
	\langle\nabla\psi,\phi\rangle_{\vec{E}}=\langle\psi,-\emph{\text{div}}\phi\rangle_{V}.
\end{equation}
Moreover, according to \eqref{div1}, the divergence operator is uniquely determined by the following equation
\begin{equation}\label{div2}
	\emph{\text{div}}\phi(v)=-\frac{1}{\mu(v)}\sum_{\vec{e}\in \vec{E}: v\in \vec{e}}\frac{\sqrt{\omega_{\vec{e}}}}{\delta_{\vec{e}}!\sqrt{\delta_{\vec{e}}-1}}\phi(\vec{e})+\frac{1}{\mu(v)}\sum_{\vec{e}\in \vec{E}: \vec{e}(1)=v}\frac{\delta_{\vec{e}}\sqrt{\omega_{\vec{e}}}}{\delta_{\vec{e}}!\sqrt{\delta_{\vec{e}}-1}}\phi(\vec{e}).
\end{equation}
\end{lemma}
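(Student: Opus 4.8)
The plan is to derive formula \eqref{div2} directly from the adjoint relation \eqref{div1}, and then check it is well-defined. Concretely, I would start from the left-hand side $\langle\nabla\psi,\phi\rangle_{\vec E}=\sum_{\vec e\in\vec E}\frac{1}{\delta_{\vec e}!}\nabla\psi(\vec e)\phi(\vec e)$ and substitute Definition~\ref{grad1}, so that $\nabla\psi(\vec e)=\sqrt{\omega_{\vec e}/(\delta_{\vec e}-1)}\sum_{u\in\vec e}(\psi(u)-\psi(\vec e(1)))$. The key manipulation is to split the double sum $\sum_{u\in\vec e}(\psi(u)-\psi(\vec e(1)))=\sum_{u\in\vec e}\psi(u)-\delta_{\vec e}\,\psi(\vec e(1))$ and then interchange the order of summation so that the outer sum runs over vertices $v\in V$. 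The first piece contributes a term where $v$ ranges over all vertices of $\vec e$ (giving the coefficient $\frac{\sqrt{\omega_{\vec e}}}{\delta_{\vec e}!\sqrt{\delta_{\vec e}-1}}$ attached to $\psi(v)$), while the second piece contributes only when $v=\vec e(1)$ (giving the coefficient $\frac{\delta_{\vec e}\sqrt{\omega_{\vec e}}}{\delta_{\vec e}!\sqrt{\delta_{\vec e}-1}}$). Collecting the coefficient of $\mu(v)\psi(v)$ in the resulting expression $\sum_{v\in V}\mu(v)\psi(v)\,[\cdots]$ and matching with $\langle\psi,-\mathrm{div}\,\phi\rangle_V=\sum_{v\in V}\mu(v)\psi(v)(-\mathrm{div}\,\phi(v))$ forces exactly \eqref{div2}.

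After establishing the identity for all $\psi\in L^2(V)$, uniqueness is immediate: since $L^2(V)$ is finite-dimensional and the pairing $\langle\cdot,\cdot\rangle_V$ is nondegenerate (each $\mu(v)>0$), testing against the indicator functions $\psi=\mathbf{1}_{\{v\}}$ shows that any two operators satisfying \eqref{div1} must agree at every vertex. I would also remark that $\mathrm{div}\,\phi$ indeed lands in $L^2(V)$ — this is automatic because $V$ is finite, so $\mathrm{div}\,\phi$ is just some real-valued function on a finite set.

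The main obstacle — really the only place care is needed — is the bookkeeping in the interchange of summation, specifically handling the factor $1/\delta_{\vec e}!$ correctly and making sure the two contributions (the ``$\psi(u)$'' part summed over all $u\in\vec e$, versus the ``$\delta_{\vec e}\psi(\vec e(1))$'' part localized at the head vertex) are kept separate rather than telescoped together. A secondary subtlety is that the sum over $\vec e\in\vec E$ with $v\in\vec e$ already accounts for all orientations, so no extra combinatorial factor $\delta_{\vec e}!$ should be inserted by hand here (unlike in the vertex-degree definition, where we passed from $E$ to $\vec E$); this is why the formula is stated over $\vec E$ directly. Once the indices are tracked carefully, the computation is a routine rearrangement, and the derivation simultaneously proves existence (the formula works) and uniqueness (nondegeneracy of the inner product).
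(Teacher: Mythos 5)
Your proposal is correct and is essentially the paper's own argument: the paper tests \eqref{div1} against the indicator functions $\psi_v=\mathbf{1}_{\{v\}}$ and reads off \eqref{div2} from $\langle\nabla\psi_v,\phi\rangle_{\vec E}=-\mu(v)\,\mathrm{div}\,\phi(v)$, which is exactly your coefficient-matching after interchanging the sums, and your uniqueness remark via nondegeneracy of $\langle\cdot,\cdot\rangle_V$ is the same device. The bookkeeping you flag (splitting off $\delta_{\vec e}\psi(\vec e(1))$ and carrying the $1/\delta_{\vec e}!$) reproduces the paper's computation and yields the stated coefficients.
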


\begin{proof} 
Take
\begin{equation*}
	\psi_v(u)=\left\{
	\begin{aligned}
		&1, &&\text{if } u=v\in V,\\
		&0, &&\text{otherwise}.
	\end{aligned}
    \right.
\end{equation*}
By the definition of inner product in $\mathcal{H}(\vec{E})$, we have
\begin{align}\label{Div}
\langle \nabla\psi_v,\phi\rangle_{\vec{E}}
&=\sum_{\vec{e}\in \vec{E}}\frac{1}{\delta_{\vec{e}}!}\nabla\psi_v(\vec{e})\phi(\vec{e})\nonumber\\
&=\sum_{\vec{e}\in \vec{E}}\frac{1}{\delta_{\vec{e}}!}\left(\frac{\sqrt{\omega_{\vec{e}}}}{\sqrt{\delta_{\vec{e}}-1}}
  \sum_{u\in\vec{e}}(\psi_{v}(u)-\psi_{v}(\vec{e}(1)))\right)\phi(\vec{e})\nonumber\\
&=\sum_{\vec{e}\in \vec{E}: v\in \vec{e}}\frac{1}{\delta_{\vec{e}}!}
  \left(\frac{\sqrt{\omega_{\vec{e}}}}{\sqrt{\delta_{\vec{e}}-1}}\sum_{u\in\vec{e}}\psi_{v}(u)
  \right)\phi(\vec{e})-\sum_{\vec{e}\in E: v\in \vec{e}}\frac{1}{\delta_{\vec{e}}!}\frac{\sqrt{\omega_{\vec{e}}}}
  {\sqrt{\delta_{\vec{e}}-1}}\delta_{\vec{e}}~\psi_{v}(\vec{e}(1))\phi(\vec{e})\nonumber\\
&=\sum_{\vec{e}\in \vec{E}: v\in \vec{e}}\frac{\sqrt{\omega_{\vec{e}}}}{\delta_{\vec{e}}!\sqrt{\delta_{\vec{e}}-1}}\phi(\vec{e})-\sum_{\vec{e}\in \vec{E}: \vec{e}(1)=v}\frac{\delta_{\vec{e}}\sqrt{\omega_{\vec{e}}}}{\delta_{\vec{e}}!\sqrt{\delta_{\vec{e}}-1}}\phi(\vec{e}).
\end{align}
On the other hand, it follows from \eqref{div1} that
\begin{equation}\label{Div1} 
\langle\nabla\psi_v,\phi\rangle_{\vec{E}}=
\langle\psi_v,-\text{div}\phi\rangle_{V}=-\mu(v)\text{div}\phi(v).
\end{equation}
Therefore, \eqref{div2} is obtained by \eqref{Div} and \eqref{Div1}.
\end{proof}

Consequently, the Laplace operator on hypergraph $\Delta:L^2(V)\to L^2(V)$ shall be defined by
\begin{equation}\label{laplace}
	\Delta\phi:=\text{div}(\nabla\phi),
\end{equation} 
and \eqref{div1} tells us that for any $\phi,\psi\in L^2(V)$, there holds
\begin{equation}\label{laplace1}
	\langle \nabla\psi,\nabla\phi\rangle_{\vec{E}}=\langle\psi,-\text{div}(\nabla\phi)\rangle_{V}=\langle\psi,-\Delta\phi\rangle_{V}.
\end{equation}
Moreover, we can compute the Laplacian of a function at a vertex $v\in V$ as follows.
\begin{lemma}\label{laplacian}
For any $\phi\in L^2(V)$, there holds
\begin{align}\label{h-laplace}
\Delta\phi(v)&=-\frac{1}{\mu(v)}\sum_{e\in E:v\in e}\left[\omega_e\phi(v)-\sum_{u\in e:u\neq v}\frac{\omega_e}{\delta_e-1}\phi(u)\right]\nonumber\\
&=\frac{1}{\mu(v)}\sum_{e\in E:v\in e}\frac{\omega_e}{\delta_e-1}\sum_{u\in e:u\neq v}(\phi(u)-\phi(v)).
\end{align}
\end{lemma}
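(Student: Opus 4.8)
The plan is to compute $\Delta\phi(v) = \text{div}(\nabla\phi)(v)$ by substituting the explicit formula for $\nabla\phi(\vec{e})$ from Definition~\ref{grad1} into the divergence formula \eqref{div2} from Lemma~\ref{div}, and then simplifying the two resulting sums over directed hyperedges into sums over undirected hyperedges. First I would write
\begin{equation*}
\Delta\phi(v)=-\frac{1}{\mu(v)}\sum_{\vec{e}\in\vec{E}:v\in\vec{e}}\frac{\sqrt{\omega_{\vec{e}}}}{\delta_{\vec{e}}!\sqrt{\delta_{\vec{e}}-1}}\nabla\phi(\vec{e})+\frac{1}{\mu(v)}\sum_{\vec{e}\in\vec{E}:\vec{e}(1)=v}\frac{\delta_{\vec{e}}\sqrt{\omega_{\vec{e}}}}{\delta_{\vec{e}}!\sqrt{\delta_{\vec{e}}-1}}\nabla\phi(\vec{e}),
\end{equation*}
and then insert $\nabla\phi(\vec{e})=\sqrt{\omega_{\vec{e}}/(\delta_{\vec{e}}-1)}\sum_{u\in\vec{e}}(\phi(u)-\phi(\vec{e}(1)))$, so that each term acquires a factor $\omega_{\vec{e}}/(\delta_{\vec{e}}-1)$ together with the bracket $\sum_{u\in\vec{e}}(\phi(u)-\phi(\vec{e}(1)))$.

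The key combinatorial step is to convert $\sum_{\vec{e}}$ into $\sum_{e}$ using the correspondence $\vec{E}\cong E\times\mathcal{S}$: for a fixed undirected hyperedge $e$ with $\delta_e=k$, there are $k!$ directed copies. In the first sum (over $\vec{e}\ni v$), the value $\nabla\phi(\vec{e})$ depends only on which vertex is $\vec{e}(1)$ (by Remark~\ref{re3.2}), so the $k!$ directed copies of $e$ split into $k$ groups of $(k-1)!$ copies according to $\vec{e}(1)$; summing the bracket $\sum_{u\in e}(\phi(u)-\phi(w))$ over all choices $w$ of first vertex and multiplying by $(k-1)!$ I would get, after dividing by $k!$, a factor $\frac{1}{k}\sum_{w\in e}\sum_{u\in e}(\phi(u)-\phi(w))$, which vanishes by antisymmetry in $u\leftrightarrow w$. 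Hence the first sum contributes $0$. In the second sum (over $\vec{e}$ with $\vec{e}(1)=v$), all $(k-1)!$ such directed copies of $e$ share the same value $\nabla\phi(\vec{e})=\sqrt{\omega_e/(k-1)}\sum_{u\in e}(\phi(u)-\phi(v))$, so this sum reduces to $\sum_{e\ni v}\frac{(k-1)!}{k!}\cdot\frac{k\sqrt{\omega_e}}{\sqrt{k-1}}\cdot\sqrt{\frac{\omega_e}{k-1}}\sum_{u\in e}(\phi(u)-\phi(v))=\sum_{e\ni v}\frac{\omega_e}{\delta_e-1}\sum_{u\in e}(\phi(u)-\phi(v))$. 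Since the $u=v$ term in the inner sum is zero, this is exactly the claimed second line of \eqref{h-laplace}, and the first line follows by expanding $\sum_{u\in e,u\neq v}(\phi(u)-\phi(v))=\sum_{u\in e,u\neq v}\phi(u)-(\delta_e-1)\phi(v)$.

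Alternatively — and this is probably cleaner to present — I would avoid \eqref{div2} entirely and derive \eqref{h-laplace} directly from the characterizing identity \eqref{laplace1}: test against $\psi=\psi_v$, use Lemma~\ref{grad3} to write $\langle\nabla\psi_v,\nabla\phi\rangle_{\vec{E}}=\int_V\nabla\psi_v\nabla\phi\,d\mu=\sum_{e\ni v}\frac{\omega_e}{\delta_e(\delta_e-1)}\big[\sum_{u\in e}(\psi_v(u)-\psi_v(w))\big]\big[\sum_{u\in e}(\phi(u)-\phi(w))\big]$ summed appropriately over $w\in e$, and match with $-\mu(v)\Delta\phi(v)$. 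The main obstacle in either route is purely bookkeeping: correctly tracking the factor $\delta_e!$ versus $(\delta_e-1)!$ when passing between directed and undirected hyperedges and keeping straight which vertex plays the role of $\vec{e}(1)$, so that the antisymmetric cancellation in the first term is justified rigorously. No genuinely hard analytic point arises; once the indexing is set up correctly the identity is a finite computation.
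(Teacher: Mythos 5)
Your proposal is correct and follows essentially the same route as the paper: substitute the divergence formula \eqref{div2} into $\Delta\phi=\text{div}(\nabla\phi)$ and convert the sums over directed hyperedges into sums over undirected ones via Remark~\ref{re3.2} and the $(\delta_e-1)!/\delta_e!$ counting. Your observation that the first sum vanishes outright by the $u\leftrightarrow w$ antisymmetry is a slightly cleaner way to organize the cancellation than the paper's term-by-term expansion, which isolates the same zero as a residual term $I_0=0$.
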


\begin{proof}
 Substitutting \eqref{div2} into \eqref{laplace} and by Remark \ref{re3.2}, we obtain
\begin{align*}
	\Delta\phi(v)&=\text{div}(\nabla\phi)(v)\\
&=-\frac{1}{\mu(v)}\sum_{\vec{e}\in \vec{E}: v\in \vec{e}}\frac{\sqrt{\omega_{\vec{e}}}}{\delta_{\vec{e}}!\sqrt{\delta_{\vec{e}}-1}}\nabla\phi(\vec{e})+\frac{1}{\mu(v)}\sum_{\vec{e}\in \vec{E}: \vec{e}(1)=v}\frac{\delta_{\vec{e}}\sqrt{\omega_{\vec{e}}}}{\delta_{\vec{e}}!\sqrt{\delta_{\vec{e}}-1}}\nabla\phi(\vec{e})\\
&=-\frac{1}{\mu(v)}\sum_{e\in E:v\in e}\left[\sum_{u\in e}\left(\frac{\omega_e}{\delta_e(\delta_e-1)}
   \sum_{w\in e}(\phi(w)-\phi(u))\right) -\frac{\omega_e}{\delta_e-1}\sum_{w\in e}
   (\phi(w)-\phi(v))\right]\\
&=-\frac{1}{\mu(v)}\sum_{e\in E:v\in e}\left[\sum_{u\in e:u\neq v}
   \left(\frac{\omega_e}{\delta_e(\delta_e-1)}\sum_{w\in e}(\phi(w)-\phi(u))\right)
   +\frac{1-\delta_e}{\delta_e}\frac{\omega_e}{\delta_e-1}
   \sum_{w\in e}(\phi(w)-\phi(v))\right]\\	
&=-\frac{1}{\mu(v)}\sum_{e\in E:v\in e}\left[\sum_{u\in e:u\neq v}\frac{\omega_e}{\delta_e(\delta_e-1)}\sum_{w\in e}\phi(w)
  -\sum_{u\in e:u\neq v}\frac{\omega_e}{\delta_e(\delta_e-1)}\delta_e\phi(u)
  -\frac{\omega_e}{\delta_e}\sum_{w\in e}\phi(w)+\frac{\omega_e}
   {\delta_e}\delta_e\phi(v)\right]\\		
&=-\frac{1}{\mu(v)}\sum_{e\in E:v\in e}\left[\omega_e\phi(v)-\sum_{u\in e:u\neq v}\frac{\omega_e}{\delta_e-1}\phi(u)\right]-\frac{1}{\mu(v)}I_0\\
&=-\frac{1}{\mu(v)}\sum_{e\in E:v\in e}\left[\omega_e\phi(v)-\sum_{u\in e:u\neq v}\frac{\omega_e}{\delta_e-1}\phi(u)\right]\\
&=\frac{1}{\mu(v)}\sum_{e\in E:v\in e}\frac{\omega_e}{\delta_e-1}\sum_{u\in e:u\neq v}(\phi(u)-\phi(v)),
\end{align*}
where
\begin{equation*}
  I_0=\sum_{e\in E:v\in e}\left[\sum_{u\in e:u\neq v}\frac{\omega_e}{\delta_e(\delta_e-1)}\sum_{w\in e}\phi(w)
  -\frac{\omega_e}{\delta_e}\sum_{w\in e}\phi(w)\right]=0.
\end{equation*}
This completes the proof of the Lemma.
\end{proof}

Although the formula of integral by parts on hypergraphs can be confirmed by combining Lemma \ref{grad3}, Lemma \ref{div} and \eqref{laplace}, in order to understand the relationship between gradient and Laplace operators on hypergraphs more directly, we will provide a proof of this formula through direct calculations.

\begin{lemma}\label{fbjf}
For any $\phi,\psi\in L^2(V)$, there holds
\begin{equation*}
  \int_{V}\nabla\phi\nabla\psi d\mu=\int_{V}(-\Delta\phi)\psi d\mu.
\end{equation*}
\end{lemma}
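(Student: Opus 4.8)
The plan is to prove the identity by a direct, two-sided computation: I will reduce both $\int_V\nabla\phi\nabla\psi\,d\mu$ and $\int_V(-\Delta\phi)\psi\,d\mu$ to one and the same sum over hyperedges of a symmetric bilinear expression, and conclude that they are equal.

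First I would rewrite the left-hand side. By Lemma~\ref{grad3} we have $\int_V\nabla\phi\nabla\psi\,d\mu=\sum_{v\in V}\mu(v)\nabla\phi(v)\nabla\psi(v)$, and Remark~\ref{grad2} expresses the Euclidean dot product $\nabla\phi(v)\nabla\psi(v)$ as $\frac1{\mu(v)}\sum_{e\in E:\,v\in e}\frac{\omega_e}{\delta_e(\delta_e-1)}\bigl(\sum_{u\in e}(\phi(u)-\phi(v))\bigr)\bigl(\sum_{u\in e}(\psi(u)-\psi(v))\bigr)$. The factor $\mu(v)$ then cancels, and since all index sets are finite I may interchange the sum over $v\in V$ with the sum over $e\in E$, obtaining
\[
\int_V\nabla\phi\nabla\psi\,d\mu=\sum_{e\in E}\frac{\omega_e}{\delta_e(\delta_e-1)}\sum_{v\in e}\Bigl(\sum_{u\in e}(\phi(u)-\phi(v))\Bigr)\Bigl(\sum_{u\in e}(\psi(u)-\psi(v))\Bigr).
\]
Next, for a fixed hyperedge $e$ I would simplify the inner sum. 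Writing $\sum_{u\in e}(\phi(u)-\phi(v))=\sum_{u\in e}\phi(u)-\delta_e\phi(v)$ and likewise for $\psi$, expanding the product and summing over $v\in e$ makes the cross terms telescope, so that the inner sum equals $\delta_e\bigl(\delta_e\sum_{v\in e}\phi(v)\psi(v)-(\sum_{u\in e}\phi(u))(\sum_{w\in e}\psi(w))\bigr)$, which in turn equals $\tfrac{\delta_e}{2}\sum_{v,w\in e}(\phi(v)-\phi(w))(\psi(v)-\psi(w))$. Substituting back gives
\[
\int_V\nabla\phi\nabla\psi\,d\mu=\frac12\sum_{e\in E}\frac{\omega_e}{\delta_e-1}\sum_{v,w\in e}(\phi(v)-\phi(w))(\psi(v)-\psi(w)).
\]

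Finally I would treat the right-hand side the same way. By Lemma~\ref{laplacian},
\[
\int_V(-\Delta\phi)\psi\,d\mu=-\sum_{v\in V}\psi(v)\sum_{e\in E:\,v\in e}\frac{\omega_e}{\delta_e-1}\sum_{u\in e:\,u\neq v}(\phi(u)-\phi(v)),
\]
and the restriction $u\neq v$ may be dropped since that term vanishes. Interchanging the order of summation as before and symmetrizing the resulting inner sum over the two hyperedge-vertices, via the elementary identity $\sum_{v,u\in e}\psi(v)(\phi(v)-\phi(u))=\tfrac12\sum_{v,u\in e}(\phi(v)-\phi(u))(\psi(v)-\psi(u))$, yields exactly the same expression as the displayed formula for the left-hand side, which proves the lemma.

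I do not anticipate a genuine obstacle here: everything is finite bookkeeping. The only points requiring care are keeping track of the extra factors of $\delta_e$ that arise when a summation variable is absent from a term (this is what produces the clean cancellation in the second step), the harmless removal of the constraint $u\neq v$, and the symmetrization of the double sum over the vertices of a hyperedge; the interchange of the finite sums over $V$ and $E$ is automatic.
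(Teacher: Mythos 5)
Your proof is correct; every step checks out (the evaluation of the inner sum as $\delta_e\bigl(\delta_e\sum_{v\in e}\phi(v)\psi(v)-(\sum_{u\in e}\phi(u))(\sum_{w\in e}\psi(w))\bigr)$, its identification with $\tfrac{\delta_e}{2}\sum_{v,w\in e}(\phi(v)-\phi(w))(\psi(v)-\psi(w))$, and the symmetrization of the double sum on the right-hand side are all valid finite computations). It starts from the same two ingredients as the paper --- the vertex-wise expression of $\nabla\phi(v)\nabla\psi(v)$ from Remark~\ref{grad2} and the Laplacian formula of Lemma~\ref{laplacian} --- but is organized differently. The paper works one-sidedly: it splits $\sum_{u\in e}\psi(u)-\delta_e\psi(v)$, recognizes the $-\delta_e\psi(v)$ contribution as $\int_V(-\Delta\phi)\psi\,d\mu$ on the nose, and disposes of the leftover term $I$ in \eqref{I} by the observation that $\sum_{v\in e}\sum_{u\in e}(\phi(u)-\phi(v))=0$ for each fixed hyperedge. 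You instead reduce \emph{both} sides to the common symmetric expression $\tfrac12\sum_{e\in E}\tfrac{\omega_e}{\delta_e-1}\sum_{v,w\in e}(\phi(v)-\phi(w))(\psi(v)-\psi(w))$. Your route is slightly longer but has the merit of exhibiting the associated Dirichlet form explicitly, which makes the symmetry of the bilinear form $(\phi,\psi)\mapsto\int_V(-\Delta\phi)\psi\,d\mu$ (and hence the self-adjointness of $\Delta$) visible at a glance; the paper's route is shorter because the target term appears verbatim after one split and only the vanishing of a single remainder needs to be checked.
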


\begin{proof}
By Remark \ref{grad2} and \eqref{h-laplace}, we have
\begin{equation*}
\begin{aligned}
\int_{V}\nabla\phi\nabla\psi d\mu
&=\sum_{v\in V}\sum_{e\in E:v\in e}\frac{\omega_e}{\delta_e(\delta_e-1)}\left[\sum_{u\in e}(\phi(u)-\phi(v))\right]
\left[\sum_{u\in e}(\psi(u)-\psi(v))\right]\\
&=\sum_{v\in V}\sum_{e\in E:v\in e}\frac{\omega_e}{\delta_e(\delta_e-1)}\left[\sum_{u\in e}(\phi(u)-\phi(v))\right]
\left[\sum_{u\in e}\psi(u)-\delta_e\psi(v)\right]\\
&=-\sum_{v\in V}\sum_{e\in E:v\in e}\frac{\omega_e}{\delta_e-1}\left[\sum_{u\in e}(\phi(u)-\phi(v))\right]\psi(v)\\
& \ \ \ \ +\sum_{v\in V}\sum_{e\in E:v\in e}\frac{\omega_e}{\delta_e(\delta_e-1)}\left[\sum_{u\in e}(\phi(u)-\phi(v))\right]\sum_{u\in e}\psi(u)\\
&=\int_{V}(-\Delta\phi)\psi d\mu+I,
\end{aligned}
\end{equation*}
where
\begin{equation}\label{I}
  I=\sum_{v\in V}\sum_{e\in E:v\in e}\frac{\omega_e}{\delta_e(\delta_e-1)}\left[\sum_{u\in e}(\phi(u)-\phi(v))\right]\sum_{u\in e}\psi(u)=0.
\end{equation}
Thus the lemma is proved.
\end{proof}

Let $\Omega$ be a non-empty finite subset of $V$ such that $\Omega^{c}$ is non-empty. We define the boundary of $\Omega$ by
\begin{equation*}
	\partial\Omega:=\left\{u\in V\setminus\Omega:v\in\Omega \text{ and } \exists\ \ e\in E \text{ such that }v,u\in e \right\}.
\end{equation*}
Let $W^{1,2}_{0}(\Omega)$ be the completion of $C_{c}(\Omega)$ under the norm
\begin{equation}\label{w012norm}
\|\phi\|_{W_0^{1,2}(\Omega)}=\left(\int_{\Omega\cup\partial\Omega}|\nabla \phi|^{2}d\mu\right)^{\frac{1}{2}},
\end{equation}
where $C_{c}(\Omega)$ is a set of all functions $\phi: \Omega\cup\partial\Omega\rightarrow\mathbb{R}$ satisfying $\text{supp}~\phi\subset\Omega$ and $\phi=0$ on $\partial\Omega$. Actually,  $W^{1,2}_{0}(\Omega)$ is a finite dimensional linear space since the $\Omega$ only contains finite vertexes and the norm \eqref{w012norm} is equivalent to the classical norm $\left(\int_{\Omega\cup\partial\Omega}|\nabla \phi|^{2}d\mu+\int_{\Omega}\phi^2d\mu\right)^{\frac{1}{2}}$ on $W^{1,2}_{0}(\Omega)$. In particular, we obtain the following formula of integral by parts.
\begin{lemma}
For any $\psi\in C_{c}(\Omega)$, there holds
\begin{equation}\label{fbjf0}
  \int_{\Omega\cup\partial\Omega}\nabla\phi\nabla\psi d\mu=\int_{\Omega}(-\Delta\phi)\psi d\mu,\ \ \forall \phi\in W^{1,2}_0(\Omega) .
\end{equation}
\end{lemma}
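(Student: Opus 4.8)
The plan is to reduce the statement \eqref{fbjf0} to the global integration-by-parts formula of Lemma~\ref{fbjf} by exploiting the fact that $\psi\in C_c(\Omega)$ vanishes off $\Omega$. First I would fix $\phi\in W_0^{1,2}(\Omega)$ and $\psi\in C_c(\Omega)$, and extend both functions by the values they already carry on $\Omega\cup\partial\Omega$; the key point is that the gradient $\nabla\phi(v)$ at a vertex $v\in\Omega\cup\partial\Omega$, by Remark~\ref{grad2}, depends only on the values of $\phi$ on the hyperedges through $v$, all of whose vertices lie in $\Omega\cup\partial\Omega$ by the definition of $\partial\Omega$. Hence the restriction of $\nabla\phi$ to $\Omega\cup\partial\Omega$ is well defined and the left-hand side of \eqref{fbjf0} makes sense intrinsically.

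Next I would invoke Lemma~\ref{fbjf} in the form $\int_V\nabla\phi\nabla\psi\,d\mu=\int_V(-\Delta\phi)\psi\,d\mu$, applied to (arbitrary but fixed) extensions of $\phi$ and $\psi$ to all of $V$ — for instance the extension by zero outside $\Omega\cup\partial\Omega$. Then I would show that both sides localize: on the right, $\psi(v)=0$ for every $v\notin\Omega$ (indeed $\operatorname{supp}\psi\subset\Omega$), so $\int_V(-\Delta\phi)\psi\,d\mu=\int_\Omega(-\Delta\phi)\psi\,d\mu$. On the left, I would argue that $\nabla\psi(v)=0$ for every vertex $v$ that does not lie in $\Omega\cup\partial\Omega$: if $v\notin\Omega\cup\partial\Omega$, then no hyperedge through $v$ meets $\Omega$, so every $u$ in such a hyperedge satisfies $u\notin\Omega$, whence $\psi(u)=\psi(v)=0$ and each component of $\nabla\psi(v)$ in Remark~\ref{grad2} vanishes. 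Therefore $\int_V\nabla\phi\nabla\psi\,d\mu=\int_{\Omega\cup\partial\Omega}\nabla\phi\nabla\psi\,d\mu$, and combining the two localizations with Lemma~\ref{fbjf} gives \eqref{fbjf0}.

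One subtlety worth addressing is that $W_0^{1,2}(\Omega)$ is defined as a completion, so a priori its elements are abstract limits rather than honest functions; I would remark that, as already noted in the text, $\Omega\cup\partial\Omega$ is finite so $C_c(\Omega)$ is finite-dimensional and already complete, hence $W_0^{1,2}(\Omega)=C_c(\Omega)$ as sets of functions and every $\phi\in W_0^{1,2}(\Omega)$ is a genuine function on $\Omega\cup\partial\Omega$ vanishing on $\partial\Omega$. With this identification the extension-by-zero step is legitimate and $\Delta\phi(v)$ is defined for $v\in\Omega$ by \eqref{h-laplace} using only values of $\phi$ on hyperedges through $v$, all contained in $\Omega\cup\partial\Omega$.

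The main obstacle, such as it is, is bookkeeping rather than depth: one must be careful that the quantities $\int_{\Omega\cup\partial\Omega}\nabla\phi\nabla\psi\,d\mu$ and $\int_\Omega(-\Delta\phi)\psi\,d\mu$ do not secretly depend on the chosen extension of $\phi$ to $V$, and that no hyperedge straddling $\partial\Omega$ and $\Omega^c$ contributes a spurious boundary term. Both are handled by the observation that $\partial\Omega$ is, by construction, large enough to contain every vertex sharing a hyperedge with $\Omega$, so that all hyperedges relevant to vertices of $\Omega$ (and to the gradient at vertices of $\partial\Omega$) lie entirely within $\Omega\cup\partial\Omega$; I would state this containment explicitly as the crux of the argument before concluding.
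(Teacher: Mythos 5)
Your proof is correct, and it takes a cleaner route than the paper's. The paper re-expands $\int_{\Omega}\nabla\phi\nabla\psi\,d\mu$ from scratch via Remark~\ref{grad2}, isolates an error term $II$, identifies $II$ with (minus) $\int_{\Omega^c}\nabla\phi\nabla\psi\,d\mu$ by comparing against the vanishing quantity $I$ from the proof of Lemma~\ref{fbjf}, and finally shows $\int_{\Omega^c}\nabla\phi\nabla\psi\,d\mu=\int_{\partial\Omega}\nabla\phi\nabla\psi\,d\mu$. You instead apply the global identity of Lemma~\ref{fbjf} to the zero-extensions and localize both sides: the right side because $\psi$ is supported in $\Omega$, the left side because $\nabla\psi(v)=0$ whenever $v\notin\Omega\cup\partial\Omega$ (no hyperedge through such a $v$ meets $\Omega$). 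This avoids the explicit bookkeeping of $II$ and $III$ entirely — and, incidentally, the sign slip in the paper's equation \eqref{II}, where $II=I-\int_{\Omega^c}\nabla\phi\nabla\psi\,d\mu$ should read $-\int_{\Omega^c}\nabla\phi\nabla\psi\,d\mu$ rather than $+\int_{\Omega^c}\nabla\phi\nabla\psi\,d\mu$; your derivation produces the boundary contribution with the correct sign automatically. Your observation that $W_0^{1,2}(\Omega)=C_c(\Omega)$ as sets of functions, so that elements are genuine functions and extension by zero is legitimate, is also the right way to dispose of the completion issue.

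One small imprecision: you assert that for every $v\in\Omega\cup\partial\Omega$ all hyperedges through $v$ have their vertices in $\Omega\cup\partial\Omega$. That is true for $v\in\Omega$ (by the definition of $\partial\Omega$), but a vertex $v\in\partial\Omega$ may also lie on hyperedges contained entirely in $\Omega^c$ with vertices outside $\partial\Omega$. This does not damage the argument, because the component of $\nabla\psi(v)$ along any such hyperedge vanishes (the hyperedge misses $\operatorname{supp}\psi\subset\Omega$), so the product $\nabla\phi(v)\nabla\psi(v)$ only receives contributions from hyperedges meeting $\Omega$, which do lie entirely in $\Omega\cup\partial\Omega$ and hence are independent of the chosen extension of $\phi$. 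You essentially say this in your final paragraph; I would simply state the claim at the level of individual components of the gradient rather than at the level of hyperedges through a vertex.
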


\begin{proof}
By Remark \ref{grad2} and straightforward calculation, we get
\begin{align}\label{fbjf2}
\int_{\Omega}\nabla\phi\nabla\psi d\mu
&=\sum_{v\in \Omega}\sum_{e\in E:v\in e}\frac{\omega_e}{\delta_e(\delta_e-1)}\left[\sum_{u\in e}(\phi(u)-\phi(v))\right]
\left[\sum_{u\in e}(\psi(u)-\psi(v))\right]\nonumber\\
&=\sum_{v\in \Omega}\sum_{e\in E:v\in e}\frac{\omega_e}{\delta_e(\delta_e-1)}\left[\sum_{u\in e}(\phi(u)-\phi(v))\right]
\left[\sum_{u\in e}\psi(u)-\delta_e\psi(v)\right]\nonumber\\
&=-\sum_{v\in \Omega}\sum_{e\in E:v\in e}\frac{\omega_e}{\delta_e-1}\left[\sum_{u\in e}(\phi(u)-\phi(v))\right]\psi(v)\nonumber\\
&\ \ \ \ +\sum_{v\in \Omega}\sum_{e\in E:v\in e}\frac{\omega_e}{\delta_e(\delta_e-1)}\left[\sum_{u\in e}(\phi(u)-\phi(v))\right]\sum_{u\in e}\psi(u)\nonumber\\
&=\int_{\Omega}(-\Delta \phi)\psi d\mu+II,
\end{align}
where
\begin{equation*}
  II=\sum_{v\in \Omega}\sum_{e\in E:v\in e}\frac{\omega_e}{\delta_e(\delta_e-1)}\left[\sum_{u\in e}(\phi(u)-\phi(v))\right]\sum_{u\in e}\psi(u).
\end{equation*}
Note that, for any $\psi\in C_{c}(\Omega)$, $\psi$ is naturally viewed as a  function defined on $V$, say $\psi\equiv 0$ on $\Omega^{c}$. 
Then by \eqref{I}, we have
\begin{equation*}
\begin{aligned}
I &=\underset{v\in V}\sum\underset{{e\in E:v\in e}}\sum\frac{\omega_e}{\delta_e(\delta_e-1)}\left[\sum_{u\in e}(\phi(u)-\phi(v))\right]\sum_{u\in e}\psi(u)\\
    &= \underset{v\in \Omega}\sum\underset{{e\in E:v\in e}}\sum\frac{\omega_e}{\delta_e(\delta_e-1)}\left[\sum_{u\in e}(\phi(u)-\phi(v))\right]\sum_{u\in e}\psi(u)\\
    &\ \ \ \ +\underset{v\in \Omega^c}\sum\underset{{e\in E:v\in e}}\sum\frac{\omega_e}{\delta_e(\delta_e-1)}\left[\sum_{u\in e}(\phi(u)-\phi(v))\right]\left[\sum_{u\in e}(\psi(u)-\psi(v))\right]\\
    &=II+\int_{\Omega^c}\nabla\phi\nabla\psi d\mu,
\end{aligned}
\end{equation*}
which implies that
\begin{equation}\label{II}
  II=I-\int_{\Omega^c}\nabla\phi\nabla\psi d\mu=\int_{\Omega^c}\nabla\phi\nabla\psi d\mu.
\end{equation}
Next, we claim that
\begin{equation}\label{IIa}
  \int_{\Omega^c}\nabla\phi\nabla\psi d\mu=\int_{\partial\Omega}\nabla\phi\nabla\psi d\mu, \ \ \forall \phi\in W^{1,2}_{0}(\Omega),\ \ \forall \psi\in C_c(\Omega).
\end{equation}
Certainly, if $\Omega^c=\partial\Omega$, then \eqref{IIa}  holds evidently. Next, we prove the case of $\partial\Omega\subset\Omega$ but $\partial\Omega\neq\Omega$. Since for any $v\in\Omega^c\setminus\partial\Omega$ and $u,v\in e$, we have $u\in\Omega^c$ and then
\begin{align}\label{omegac}
  \int_{\Omega^c}\nabla\phi\nabla\psi d\mu
   &=\underset{v\in \Omega^c}\sum\underset{{~e\in E:v\in e}}\sum\frac{\omega_e}{\delta_e(\delta_e-1)}
      \left[\sum_{u\in e}(\phi(u)-\phi(v))\right]\left[\sum_{u\in e}(\psi(u)-\psi(v))\right]\nonumber\\
   &=\underset{v\in \Omega^c}\sum\underset{{~e\in E:v\in e}}\sum\frac{\omega_e}{\delta_e(\delta_e-1)}
      \sum_{u\in e}\phi(u)\cdot\sum_{u\in e}\psi(u)\nonumber\\
   &=\underset{v\in \partial\Omega}\sum\underset{{~e\in E:v\in e}}\sum\frac{\omega_e}{\delta_e(\delta_e-1)}
      \sum_{u\in e}\phi(u)\cdot\sum_{u\in e}\psi(u)\nonumber\\  
   &\ \ \ \  +\underset{v\in \Omega^c\setminus\partial\Omega}\sum\underset{{~e\in E:v\in e}}\sum\frac{\omega_e}{\delta_e(\delta_e-1)}
      \sum_{u\in e, u\in\Omega^c}\phi(u)\cdot\sum_{u\in e, u\in\Omega^c}\psi(u).
\end{align}
Thus by \eqref{omegac}, for any $\phi\in W^{1,2}_{0}(\Omega)$ and $\psi\in C_c(\Omega)$, we obtain
\begin{equation*}
  \int_{\Omega^c}\nabla\phi\nabla\psi d\mu=\int_{\partial\Omega}\nabla\phi\nabla\psi d\mu+III, 
\end{equation*}
where
\begin{equation*}
  III=\underset{v\in \Omega^c\setminus\partial\Omega}\sum\underset{{~e\in E:v\in e}}\sum\frac{\omega_e}{\delta_e(\delta_e-1)}
      \sum_{u\in e, u\in\Omega^c}\phi(u)\cdot\sum_{u\in e, u\in\Omega^c}\psi(u)=0,
\end{equation*}
which implies \eqref{IIa}.

Therefore, it follows from \eqref{fbjf2}, \eqref{II} and \eqref{IIa} that \eqref{fbjf0} holds. 
\end{proof}

\section{Maximum Principles on hypergraphs}\label{principle}

In this section, we introduce several maximum principle on hypergraphs.

\begin{lemma}\label{Wmp}
\emph{(Weak maximum principle)} Assume that the function $c(v)>0$ for any $v\in V$. If the function $\phi$ defined on $V$ satisfies $-\Delta \phi+c(v)\phi\geq0$,
then $\phi\geq 0$ on $V$.
\end{lemma}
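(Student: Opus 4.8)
The plan is to argue by contradiction using the fact that $V$ is finite, so $\phi$ attains its minimum, together with the pointwise formula for the Laplacian from Lemma~\ref{laplacian}, which makes the sign of $\Delta\phi$ at a minimizer transparent.

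\textbf{Key steps.} First I would let $v_0 \in V$ be a vertex at which $\phi$ attains its minimum over $V$; this is possible since $V$ is finite. Suppose, for contradiction, that $\phi(v_0) < 0$. Next I would evaluate $\Delta\phi(v_0)$ using the second expression in \eqref{h-laplace},
\begin{equation*}
\Delta\phi(v_0)=\frac{1}{\mu(v_0)}\sum_{e\in E: v_0\in e}\frac{\omega_e}{\delta_e-1}\sum_{u\in e: u\neq v_0}\bigl(\phi(u)-\phi(v_0)\bigr).
\end{equation*}
Since $\mu(v_0)>0$, $\omega_e>0$, $\delta_e\geq 2$, and $\phi(u)-\phi(v_0)\geq 0$ for every $u$ because $v_0$ is a minimum point, every term on the right-hand side is nonnegative, so $\Delta\phi(v_0)\geq 0$, i.e. $-\Delta\phi(v_0)\leq 0$. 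Combining this with $c(v_0)>0$ and $\phi(v_0)<0$ gives
\begin{equation*}
-\Delta\phi(v_0)+c(v_0)\phi(v_0)\leq c(v_0)\phi(v_0)<0,
\end{equation*}
which contradicts the hypothesis $-\Delta\phi+c(v)\phi\geq 0$ on $V$. Hence $\phi(v_0)\geq 0$, and since $v_0$ is a minimizer, $\phi\geq 0$ everywhere on $V$.

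\textbf{Main obstacle.} There is no serious obstacle here; the proof is short. The only points requiring a little care are invoking finiteness of $V$ to guarantee the minimum is attained, and correctly tracking signs so that the strict inequality $c(v_0)\phi(v_0)<0$ genuinely contradicts the non-strict hypothesis $-\Delta\phi+c\phi\geq 0$. One should also note that the argument only uses $c(v_0)>0$ at the minimizing vertex, so the hypothesis $c>0$ on all of $V$ is more than enough.
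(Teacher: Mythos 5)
Your proof is correct, but it takes a genuinely different route from the paper. You argue pointwise at a global minimizer: finiteness of $V$ gives a vertex $v_0$ where $\phi$ attains its minimum, the explicit formula \eqref{h-laplace} shows $\Delta\phi(v_0)\geq 0$ there, and $c(v_0)>0$ together with the assumed sign of $-\Delta\phi+c\phi$ forces $\phi(v_0)\geq 0$. The paper instead runs an energy (variational) argument: it sets $\phi^-=\min\{\phi,0\}$, verifies by a two-case pointwise computation that $-\Delta\phi^-+c\phi^-\geq 0$, and then pairs this against $\phi^-$ using the integration-by-parts identity \eqref{laplace1} to conclude $0\geq\|\nabla\phi^-\|^2_{\vec{E}}+\sum_v\mu(v)c(v)|\phi^-(v)|^2\geq 0$, whence $\phi^-\equiv 0$. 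Your argument is shorter and more elementary, relying only on Lemma~\ref{laplacian} and positivity of the coefficients $\omega_e/(\delta_e-1)$, and it isolates exactly where $c>0$ is used (only at the minimizing vertex). The paper's truncation-plus-energy scheme is heavier here but is the template it reuses for Lemma~\ref{lem2.1}, where $c\equiv 0$ and the pointwise contradiction at a minimizer is no longer available: there one really needs $\|\nabla\phi^-\|_{\vec{E}}=0$ plus connectedness to conclude $\phi^-$ is constant. Both proofs are complete and valid for the stated lemma.
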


\begin{proof}
Let $\phi^-=\min\{\phi,0\}$. For any $v\in V$, we claim that
\begin{equation}\label{weak}
 -\Delta \phi^-(v)+c(v)\phi^-(v)\geq0,
\end{equation}
from which, one has
\begin{align*}
  0 &\geq \langle-\Delta \phi^-+c(v)\phi^-,\phi^-\rangle_{V} \\
   &=\langle\nabla\phi^-,\nabla\phi^-\rangle_{\vec{E}}+\langle c(v)\phi^-,\phi^-\rangle_{V}\\
   &= \|\nabla\phi^-\|^2_{\vec{E}}+\underset{v\in V}\sum~\mu(v)c(v)|\phi^-(v)|^2\\
   &\geq 0.
\end{align*}
This lead to $\phi^-\equiv 0$ on $V$.
Next, we prove \eqref{weak} in the following two cases.

(i) If $\phi(v)\geq 0$, then $\phi^-(v)=0$ and thus
\begin{align*}
  -\Delta \phi^-(v)&=\frac{1}{\mu(v)}\sum_{e\in E:v\in e}\Bigg[\omega_e\phi^-(v)-\sum_{u\in e:u\neq v}\frac{\omega_e}{\delta_e-1}\phi^-(u)\Bigg]\\
  &=-\frac{1}{\mu(v)}\sum_{e\in E:v\in e}\sum_{u\in e:u\neq v}\frac{\omega_e}{\delta_e-1}\phi^-(u)\\
  &\geq0,
\end{align*}
since $\phi^-(z)\leq 0$ for any $z\in V$. 
Therefore $-\Delta \phi^-(v)+c(v)\phi^-(v)=-\Delta \phi^-(v)\geq0$.

(ii) If $\phi(v)<0$, one has $\phi^-(v)=\phi(v)$ and thus
\begin{align*}
  -\Delta \phi^-(v)&=\frac{1}{\mu(v)}\sum_{e\in E:v\in e}\Bigg[\omega_e\phi^-(v)-\sum_{u\in e:u\neq v}\frac{\omega_e}{\delta_e-1}\phi^-(u)\Bigg]\\
   &= \frac{1}{\mu(v)}\sum_{e\in E:v\in e}\Bigg[\omega_e\phi(v)-\sum_{u\in e:u\neq v}\frac{\omega_e}{\delta_e-1}\phi^-(u)\Bigg]\\
   &\geq\frac{1}{\mu(v)}\sum_{e\in E:v\in e}\Bigg[\omega_e\phi(v)-\sum_{u\in e:u\neq v}\frac{\omega_e}{\delta_e-1}\phi(u)\Bigg]\\
   &=-\Delta \phi(v),
\end{align*}
which implies that $-\Delta \phi^-(v)+c(v)\phi^-(v)\geq-\Delta \phi(v)+c(v)\phi(v)\geq0$.

Combining (i) and (ii), we get \eqref{weak}. The proof of this lemma is completed.
\end{proof}

If $c\equiv0$ in Lemma~\ref{Wmp}, then we have the following weak maximum principle.

\begin{lemma}\label{lem2.1} If the function $\phi$ defined on $V$ satisfies
	$-\Delta\phi\geq0$, then $\phi^-$ is a constant function, where $\phi^-=\min\{\phi,0\}$.
\end{lemma}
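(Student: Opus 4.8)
The plan is to reuse the computation from the proof of Lemma~\ref{Wmp}. Since $-\Delta\phi\geq 0$ everywhere, the claim \eqref{weak} established there (with $c\equiv 0$) gives $-\Delta\phi^-(v)\geq 0$ for every $v\in V$. Pairing this with $\phi^-$ in $L^2(V)$ and applying the integration-by-parts identity \eqref{laplace1} yields
\begin{equation*}
0\geq \langle -\Delta\phi^-,\phi^-\rangle_V=\langle \nabla\phi^-,\nabla\phi^-\rangle_{\vec{E}}=\|\nabla\phi^-\|_{\vec{E}}^2\geq 0,
\end{equation*}
so $\|\nabla\phi^-\|_{\vec{E}}=0$, i.e. $\nabla\phi^-(\vec e)=0$ for all $\vec e\in\vec E$.

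Next I would translate the vanishing of the gradient into the statement that $\phi^-$ is constant. Fix any hyperedge $e$ and pick an orientation $\vec e$ with first vertex $v$; then $\nabla\phi^-(\vec e)=\sqrt{\omega_{\vec e}/(\delta_{\vec e}-1)}\sum_{u\in e}(\phi^-(u)-\phi^-(v))=0$. Using Remark~\ref{re3.2}, one may cyclically change the base vertex: for each $w\in e$ one gets $\sum_{u\in e}(\phi^-(u)-\phi^-(w))=0$. Subtracting the identities for two base vertices $w,w'\in e$ gives $\delta_e(\phi^-(w')-\phi^-(w))=0$, hence $\phi^-$ is constant on every hyperedge. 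Since $H$ is connected, any two vertices are joined by a hyperpath, and constancy along overlapping hyperedges propagates the common value, so $\phi^-$ is a constant function on $V$.

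The main obstacle — really the only subtlety — is the passage from $\|\nabla\phi^-\|_{\vec E}^2=0$ to $\phi^-$ being constant. Unlike the situation in Lemma~\ref{Wmp}, there is no zeroth-order term to force $\phi^-\equiv 0$, so one cannot conclude more than constancy; one must argue carefully that the quadratic form $\phi\mapsto\|\nabla\phi\|_{\vec E}^2$ has kernel exactly the constants, which is where connectedness of $H$ enters. Everything else is a direct citation of \eqref{weak} (in the case $c\equiv 0$) and \eqref{laplace1}.
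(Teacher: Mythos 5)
Your proof is correct and follows essentially the same route as the paper's: establish $-\Delta\phi^-\geq 0$ pointwise (the $c\equiv 0$ case of the claim in Lemma~\ref{Wmp}), deduce $\|\nabla\phi^-\|_{\vec E}=0$ via \eqref{laplace1}, and then use the freedom to change the base vertex within each hyperedge together with connectedness to conclude constancy. The only nitpick is that the ability to change the base vertex is justified by the symmetry of $H$ (every permutation of a hyperedge lies in $\vec E$), which is what the paper invokes, rather than by Remark~\ref{re3.2}, which only compares directed hyperedges sharing the \emph{same} first vertex.
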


\begin{proof}
	For any $v\in V$, we claim that
	\begin{equation}\label{weak1}
		-\Delta\phi^-(v)\geq0.
	\end{equation}
Indeed, if $\phi(v)\geq0$, then $\phi^-(v)=0$ and 
\begin{align*}
-\Delta\phi^-(v)&=\frac{1}{\mu(v)}\sum_{e\in E:v\in e}\left[\omega_e\phi^-(v)
                  -\sum_{u\in e:u\neq v}\frac{\omega_e}{(\delta_e-1)}\phi^-(u)\right]\\
			    &=-\frac{1}{\mu(v)}\sum_{e\in E:v\in e}\sum_{u\in e:u\neq v}\frac{\omega_e}
                  {(\delta_e-1)}\phi^-(u)\\
                &\geq0,
\end{align*}
where the last inequality is due to $\phi^-(z)\leq0$ for any $z\in V$. If $\phi(v)<0$, then we get
\begin{align*}
-\Delta\phi^-(v)&=\frac{1}{\mu(v)}\sum_{e\in E:v\in e}\left[\omega_e\phi^-(v)
                  -\sum_{u\in e:u\neq v}\frac{\omega_e}{(\delta_e-1)}\phi^-(u)\right]\\
			    &=\frac{1}{\mu(v)}\sum_{e\in E:v\in e}\left[\omega_e\phi(v)
                  -\sum_{u\in e:u\neq v}\frac{\omega_e}{(\delta_e-1)}\phi^-(u)\right]\\
			    &\geq\frac{1}{\mu(v)}\sum_{e\in E:v\in e}\left[\omega_e\phi(v)
                  -\sum_{u\in e:u\neq v}\frac{\omega_e}{(\delta_e-1)}\phi(u)\right]\\
			    &=-\Delta\phi(v)\\
                &\geq0.
		\end{align*}
Therefore, $-\Delta\phi^-(v)\geq0$ for any $v\in V$, which confirms \eqref{weak1}.

On the other hand, by \eqref{laplace1} we have
\begin{equation*}	
0\geq\langle-\Delta\phi^-,\phi^-\rangle_{V}=\langle\nabla\phi^-,\nabla\phi^-\rangle_{\vec{E}}=\Vert\nabla\phi^-\Vert^2_{\vec{E}}\geq0,
\end{equation*}
and then
\begin{equation*}
\nabla\phi^-(\vec{e})=0,\ \ \forall \vec{e}\in \vec{E}.
\end{equation*}
Note that
\begin{equation*}
\nabla\phi^-(\vec{e})=\frac{\sqrt{\omega_{\vec{e}}}}{\sqrt{\delta_{\vec{e}}-1}}\sum^{\delta_{\vec{e}}}_{i=1}(\phi^-_i-\phi^-_1)=0,
\end{equation*}
where $\phi^-_i$ is the value of $\phi^-$ at the $i$-th vertex of the directed hyperedge $\vec{e}$.
Then
\begin{equation}\label{phi}
  \sum^{\delta_{\vec{e}}}_{i=1}(\phi^-_i-\phi^-_1)=0.
\end{equation}
By the symmetry of the hypergraph, we know that $H$ contains all directed hyperedges obtained by permuting the vertices within $\vec{e}$.
Then we have
\begin{equation*}
\sum^{\delta_{\vec{e}}}_{i=1}(\phi^-_i-\phi^-_1)=0, \sum^{\delta_{\vec{e}}}_{i=1}(\phi^-_i-\phi^-_2)=0, \cdots, \sum^{\delta_{\vec{e}}}_{i=1}(\phi^-_i-\phi^-_{\delta_{\vec{e}}})=0
	\end{equation*}
and 
	\begin{equation*}
		\phi^-_1=\phi^-_2=\cdots=\phi^-_{\delta_{\vec{e}}}=\frac{1}{\delta_{\vec{e}}}\sum^{\delta_{\vec{e}}}_{i=1}\phi^-_i,
	\end{equation*}
which implies that $\phi^-$ is a constant function, since the hypergraph $H$ is connect.
\end{proof}

\begin{lemma}\label{constant}
  Let $\phi$ be a function defined on $V$.  Then  $-\Delta\phi=0$ if and only if $\phi$ is a constant function on $V$.
\end{lemma}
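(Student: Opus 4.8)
The plan is to prove the equivalence by proving both directions separately. The easy direction is that if $\phi$ is constant on $V$, then $-\Delta\phi=0$: this is immediate from the formula \eqref{h-laplace} in Lemma~\ref{laplacian}, since every difference $\phi(u)-\phi(v)$ vanishes. So the substance is the forward implication: if $-\Delta\phi=0$, then $\phi$ is constant.

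For the forward direction I would reuse the pairing identity \eqref{laplace1} together with the same argument already carried out in the proof of Lemma~\ref{lem2.1}. Specifically, first note that $-\Delta\phi=0$ certainly implies $-\Delta\phi\geq 0$, so Lemma~\ref{lem2.1} applies and tells us $\phi^-=\min\{\phi,0\}$ is constant. But this alone is not enough, since $\phi$ could still be non-constant on the positive side. The cleaner route is to argue directly: test $-\Delta\phi=0$ against $\phi$ itself in the inner product $\langle\cdot,\cdot\rangle_V$. By \eqref{laplace1},
\begin{equation*}
0=\langle -\Delta\phi,\phi\rangle_V=\langle \nabla\phi,\nabla\phi\rangle_{\vec{E}}=\|\nabla\phi\|_{\vec{E}}^2,
\end{equation*}
hence $\nabla\phi(\vec{e})=0$ for every $\vec{e}\in\vec{E}$. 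From here I would run verbatim the concluding argument of Lemma~\ref{lem2.1}: writing $\phi_i$ for the value of $\phi$ at the $i$-th vertex of $\vec{e}$, the vanishing of $\nabla\phi(\vec{e})$ gives $\sum_{i=1}^{\delta_{\vec{e}}}(\phi_i-\phi_1)=0$; using the symmetry of the hypergraph, every cyclic/permuted version of $\vec{e}$ is also a directed hyperedge, so $\sum_{i=1}^{\delta_{\vec{e}}}(\phi_i-\phi_j)=0$ for each $j$, which forces $\phi_1=\phi_2=\cdots=\phi_{\delta_{\vec{e}}}$; that is, $\phi$ is constant on each hyperedge. Finally, because $H$ is connected, any two vertices are joined by a hyperpath, and constancy propagates along overlapping hyperedges, so $\phi$ is constant on all of $V$.

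I do not expect a serious obstacle here — the whole argument is essentially a repackaging of tools already established (Lemma~\ref{laplacian} for one direction, \eqref{laplace1} plus the symmetry-and-connectedness argument from Lemma~\ref{lem2.1} for the other). The one point that deserves a careful sentence is the propagation step: from ``$\phi$ is constant on every hyperedge'' to ``$\phi$ is constant on $V$'' one uses that consecutive hyperedges in a hyperpath share at least one vertex, so their common constant values must agree; iterating along the hyperpath connecting any two given vertices yields the result. This is the only place where connectedness of $H$ is genuinely invoked, and it is worth stating explicitly rather than leaving implicit.
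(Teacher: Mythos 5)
Your proposal is correct and follows essentially the same route as the paper: the constant-implies-harmonic direction via the explicit Laplacian formula, and the converse by pairing $-\Delta\phi=0$ with $\phi$ to get $\|\nabla\phi\|_{\vec{E}}^2=0$, then invoking the symmetry of the directed hyperedges to force $\phi$ to be constant on each hyperedge and connectedness to propagate this to all of $V$. No gaps.
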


\begin{proof}
Obviously, if $\phi$ is a constant function on $V$, then $-\Delta\phi=0$ by using Lemma \ref{laplacian}. On the other hand,
if $-\Delta\phi=0$, then we have 
\begin{equation*} 
0=\langle-\Delta\phi,\phi\rangle_{V}=\langle\nabla\phi,\nabla\phi\rangle_{\vec{E}}=\Vert\nabla\phi\Vert^2_{\vec{E}}\geq0.
\end{equation*}
Thus
	\begin{equation*}
		\nabla\phi(\vec{e})=0,\ \ \forall \vec{e}\in \vec{E}.
	\end{equation*}
Namely,
	\begin{equation*}
		\nabla\phi(\vec{e})=\frac{\sqrt{\omega_{\vec{e}}}}{\sqrt{\delta_{\vec{e}}-1}}\sum^{\delta_{\vec{e}}}_{i=1}(\phi_i-\phi_1)=0,
	\end{equation*}
where $\phi_i$ is the values of $\phi$ at the $i$-th vertex of the directed hyperedge $\vec{e}$. Then we have 
$$\sum^{\delta_{\vec{e}}}_{i=1}(\phi_i-\phi_1)=0.$$ 
Similar to the argument of \eqref{phi}, we obtain $\phi_i=\phi_1$ for all $v_i\in \vec{e}$.  Then by the connectedness of the hypergraph $H$, we obtain that $\phi$ is a constant function.
\end{proof}


\begin{lemma}\label{Strong1}
\emph{(Strong maximum principle)} Assume that  $\phi\geq 0$ and $-\Delta \phi+c(v)\phi\geq0$ for some function $c(v)\geq0$, $\forall v\in V$. If  there exists $v_{0}\in V$ such that $\phi(v_0)=0$, then $\phi\equiv 0$ on $V$.
\end{lemma}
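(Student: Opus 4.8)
The plan is to exploit the vertex formula for the Laplacian from Lemma~\ref{laplacian} together with the connectedness of $H$, arguing that the set $Z:=\{v\in V:\phi(v)=0\}$ is simultaneously nonempty, closed, and "open" in the sense that it propagates along hyperedges. Since $v_0\in Z$, the set is nonempty by hypothesis. The heart of the matter is the following propagation claim: if $v\in Z$, then $\phi(u)=0$ for every $u$ sharing a hyperedge with $v$.

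To prove the propagation step, fix $v\in Z$ and evaluate the hypothesis $-\Delta\phi(v)+c(v)\phi(v)\ge 0$ at $v$. Because $\phi(v)=0$, the zeroth-order term drops out, and by \eqref{h-laplace} we get
\begin{equation*}
-\Delta\phi(v)=\frac{1}{\mu(v)}\sum_{e\in E:v\in e}\left[\omega_e\phi(v)-\sum_{u\in e:u\neq v}\frac{\omega_e}{\delta_e-1}\phi(u)\right]
=-\frac{1}{\mu(v)}\sum_{e\in E:v\in e}\sum_{u\in e:u\neq v}\frac{\omega_e}{\delta_e-1}\phi(u)\ge 0.
\end{equation*}
Every term in the final double sum is $\le 0$ because $\omega_e>0$, $\delta_e-1>0$, $\mu(v)>0$ and $\phi\ge 0$ everywhere on $V$; hence the whole sum is $\le 0$, and combined with the inequality $\ge 0$ it must be exactly $0$. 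A sum of nonpositive terms that vanishes forces each term to vanish, so $\phi(u)=0$ for every $u\ne v$ lying in some hyperedge $e$ containing $v$. This establishes the propagation claim.

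With the propagation claim in hand, I finish by a connectedness argument. Let $v\in V$ be arbitrary. Since $H$ is connected, there is a hyperpath $\gamma=\{e_1,\dots,e_l\}$ with $v_0\in e_1$, $v\in e_l$, and $e_n\cap e_{n+1}\neq\emptyset$ for $1\le n\le l-1$. Applying the propagation claim starting from $v_0\in e_1\cap Z$ gives $\phi\equiv 0$ on all of $e_1$; in particular $\phi$ vanishes at any vertex in $e_1\cap e_2$, so the claim applies again to give $\phi\equiv 0$ on $e_2$; iterating along the chain yields $\phi\equiv 0$ on $e_l$, hence $\phi(v)=0$. As $v$ was arbitrary, $\phi\equiv 0$ on $V$.

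The only subtlety — the "main obstacle," though it is mild — is making sure the sign bookkeeping in the propagation step is airtight: one must use that $c(v)\phi(v)=0$ at the vanishing vertex (so the possibly-only-nonnegative coefficient $c$ causes no trouble), and that $\phi\ge 0$ is assumed globally, not just near $v_0$, so that every summand $-\frac{\omega_e}{\mu(v)(\delta_e-1)}\phi(u)$ is genuinely $\le 0$. Everything else is the standard "nonempty + propagates + connected $\Rightarrow$ everything" scheme, and no new analytic input beyond Lemma~\ref{laplacian} is needed.
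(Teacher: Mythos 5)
Your proposal is correct and follows essentially the same route as the paper's own proof: evaluate $-\Delta\phi+c\phi\ge 0$ at a zero of $\phi$ via the vertex formula \eqref{h-laplace}, conclude that $\phi$ vanishes on every hyperedge through that vertex since a vanishing sum of nonpositive terms forces each term to vanish, and then propagate along a hyperpath using connectedness. The only difference is cosmetic — you spell out the iteration along the hyperpath in more detail than the paper does.
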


\begin{proof}
Let $v=v_0$. Then we get
\begin{equation*}
-\Delta \phi(v_0)+c(v_0)\phi(v_0)= \frac{1}{\mu(v_0)}\sum_{e\in E:v_0\in e}\Bigg[\omega_e\phi(v_0)-\sum_{u\in e:u\neq v_0}\frac{\omega_e}{\delta_e-1}\phi(u)\Bigg]+c(v_0)\phi(v_0)\geq 0,
\end{equation*}
which implies
\begin{equation}\label{strong2}
  \frac{1}{\mu(v_0)}\sum_{e\in E:v_0\in e}\sum_{u\in e:u\neq v_0}\frac{\omega_e}{\delta_e-1}\phi(u)\leq 0.
\end{equation}
Since $\phi\geq0$, $\omega_e>0$ and $\delta_e-1>0$, it follows from \eqref{strong2} that
\begin{equation*}
 \phi(u)=0,\ \ \forall u\in e,
\end{equation*}
where $v_{0}\in e$ and $u\neq v_0$. Therefore, $\phi\equiv 0$ on $V$ by the connectedness of $H$.
\end{proof}

\begin{remark}
Let $-\Delta\phi\geq0$ and $\phi(v_0)=c$ for some constant $c$. There are some interesting results as follows. By Lemma \ref{lem2.1}, we know that $\phi^-$ is a constant function. Thus, 
\begin{itemize}
  \item if $c\leq0$, then $\phi^-\equiv c$ and it follows from Lemma \ref{constant} and Lemma \ref{Strong1} that $\phi\equiv c$; 
  \item if $c>0$, then $\phi^-\equiv0$ and it follows from Lemma \ref{Strong1} that  $\phi>0$.
\end{itemize}
\end{remark}

When considering the Dirichlet problem on the connected hypergraph $H$, such as problem \eqref{dirichlet} in the subsection 4.1, we will use the following maximum principle.

\begin{lemma}\label{maxmin}
Let $\Omega$ be a non-empty finite subset of $V$ such that $\Omega^{c}$ is non-empty. If for any $\phi: V\rightarrow \mathbb{R}$, $\Delta\phi(v)\geq 0$ for all $v\in\Omega$, then
\begin{equation}\label{max}
  \underset{\Omega}\max\phi\leq\underset{\Omega^c}\max\phi.
\end{equation}
 If for any $\phi: V\rightarrow \mathbb{R}$, $\Delta\phi(v)\leq 0$ for all $v\in\Omega$, then
\begin{equation}\label{min}
  \underset{\Omega}\min\phi\geq\underset{\Omega^c}\min\phi.
\end{equation}
\end{lemma}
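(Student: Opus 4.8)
The plan is to prove \eqref{max} by contradiction, exploiting the explicit pointwise formula \eqref{h-laplace} for the Laplacian together with the connectedness of $H$; the inequality \eqref{min} then follows by applying \eqref{max} to $-\phi$. Suppose \eqref{max} fails, so that $M:=\max_{\Omega}\phi>\max_{\Omega^c}\phi$. Since $\Omega$ is finite and $\phi$ attains its maximum over $\Omega\cup\Omega^c=V$ somewhere, the assumption forces this maximum to be attained at some vertex $v_0\in\Omega$ with $\phi(v_0)=M$, and moreover $M>\phi(w)$ for every $w\in\Omega^c$.

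The key step is to show that once $\phi(v_0)=M=\max_V\phi$ for some $v_0\in\Omega$, then $\phi(u)=M$ for every vertex $u$ sharing a hyperedge with $v_0$. Indeed, by \eqref{h-laplace},
\begin{equation*}
0\le\Delta\phi(v_0)=\frac{1}{\mu(v_0)}\sum_{e\in E:v_0\in e}\frac{\omega_e}{\delta_e-1}\sum_{u\in e:u\neq v_0}(\phi(u)-\phi(v_0)).
\end{equation*}
Every summand on the right is $\le 0$ because $\phi(u)\le M=\phi(v_0)$, while $\omega_e>0$ and $\delta_e-1\ge 1$; hence each term vanishes, forcing $\phi(u)=M$ for all $u\in e$ with $v_0\in e$. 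This is the hypergraph analogue of the classical ``propagation of maxima'' argument, and it is the main (though not difficult) obstacle: one must be careful that the coefficients are genuinely positive, which is exactly why $\delta_e\ge 2$ and $\omega_e>0$ are built into the setup.

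From here I would run a connectedness/propagation argument. Let $S:=\{v\in V:\phi(v)=M\}$; we have shown $v_0\in S\cap\Omega$ and that whenever $v\in S\cap\Omega$, every vertex lying in a common hyperedge with $v$ also lies in $S$. Now take any vertex $z\in\Omega^c$; since $H$ is connected there is a hyperpath from $v_0$ to $z$, i.e.\ a chain of hyperedges successively intersecting, and walking along it we would propagate the value $M$ step by step --- the only caveat is that propagation from a vertex $v$ requires $v\in\Omega$, not merely $v\in S$. But if every vertex along the chain up to some point lies in $S$ and that point is the first one in $\Omega^c$, we already reach a vertex of $\Omega^c$ with value $M$, contradicting $M>\max_{\Omega^c}\phi$. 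More carefully: let $v$ be the last vertex on the hyperpath (starting from $v_0\in\Omega$) such that all vertices up to and including $v$ lie in $\Omega$; propagating along the shared hyperedges keeps the value $M$ up to $v$, and the next hyperedge in the path contains a vertex outside $\Omega$ which then also has value $M$, contradicting $M>\max_{\Omega^c}\phi$. If no such ``next'' vertex exists the hyperpath never leaves $\Omega$, but it ends at $z\in\Omega^c$, a contradiction. Either way \eqref{max} holds.

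Finally, for \eqref{min}: if $\Delta\phi(v)\le 0$ on $\Omega$, then $\Delta(-\phi)(v)\ge 0$ on $\Omega$, so \eqref{max} applied to $-\phi$ gives $\max_\Omega(-\phi)\le\max_{\Omega^c}(-\phi)$, i.e.\ $-\min_\Omega\phi\le-\min_{\Omega^c}\phi$, which is \eqref{min}. I would present \eqref{max} in full and dispatch \eqref{min} with this one-line reduction.
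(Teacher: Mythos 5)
Your proposal is correct and follows essentially the same route as the paper: argue by contradiction, use the pointwise formula \eqref{h-laplace} at a maximum point to force $\phi(u)=M$ on every hyperedge through that point, and propagate along a hyperpath by connectedness until a vertex of $\Omega^c$ attains the value $M$. The only differences are cosmetic --- you reduce \eqref{min} to \eqref{max} via $-\phi$ where the paper says ``similar,'' and you are in fact slightly more careful than the paper in noting that the propagation step may only be applied from vertices lying in $\Omega$.
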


\begin{proof}
It suffices to prove \eqref{max}, since the proof of \eqref{min} is similar. Since $V$ is finite, $\underset{\Omega^c}\max\phi<+\infty$. Then, by substituting  $\phi$ with $\phi+constant$,  we can assume that $\underset{\Omega^c}\max\phi=0$.
Let
\begin{equation*}
  M=\underset{\Omega}\max\phi.
\end{equation*}
Next, we prove that $M\leq0$. By the contrary, we assume that $M>0$. Define
\begin{equation*}
  S:=\{v\in \Omega:\phi(v)=M\}.
\end{equation*}
Obviously, $S\subset \Omega$ and $S\neq\emptyset$.
\vskip4pt
\noindent {\bf Claim 1.} \emph{If $v\in S$, then for all $u\in V$ satisfying $u,v\in e$, we have $u\in S$.}
\vskip4pt
In fact, for any $v\in S$, we have
\begin{equation*}
  \Delta\phi(v)=-\frac{1}{\mu(v)}\sum_{e\in E:v\in e}\Bigg[\omega_e\phi(v)-\sum_{u\in e:u\neq v}\frac{\omega_e}{\delta_e-1}\phi(u)\Bigg]\geq 0.
\end{equation*}
Thus
\begin{equation*}
 \sum_{e\in E:v\in e}\omega_e\phi(v)\leq\sum_{e\in E:v\in e}\sum_{u\in e:u\neq v}\frac{\omega_e}{\delta_e-1}\phi(u).
\end{equation*}
Since $\phi(u)\leq M $ for all $u\in V$ satisfying $u,v\in e$, we get
\begin{equation*}
  \sum_{e\in E:v\in e}\omega_eM\leq\sum_{e\in E:v\in e}\sum_{u\in e:u\neq v}\frac{\omega_e}{\delta_e-1}\phi(u)\leq\sum_{e\in E:v\in e}\omega_eM.
\end{equation*}
Then we obtain
\begin{equation*}
  \sum_{e\in E:v\in e}\sum_{u\in e:u\neq v}\frac{\omega_e}{\delta_e-1}(M-\phi(u))=0,
\end{equation*}
which implies that $\phi(u)=M$ for all $u\in V$ satisfying $u,v\in e$, since $M-\phi(u)\geq 0$ and $\frac{\omega_e}{\delta_e-1}\geq0$.

\vskip4pt
\noindent {\bf Claim 2.} \emph{Let $A\subset V$ and $A\neq\emptyset$ such that $v\in A$ implies that $u\in A$, where $u,v\in e$ and $e\in E$. Then $A=V$.}
\vskip4pt
Indeed, let $v\in A$ and $u$ be any other vertex in $V$. Since $H$ is connected, there is a hyperpath  $\{e_1,e_2,\cdot\cdot\cdot,e_k\}$ such that $v\in e_1$, $u\in e_k$ and $e_i\cap e_{i+1}\neq\emptyset$ for all $1\leq i\leq k-1$. Note that $v\in e_1$ and $v\in A$ implies $u_{1j}\in A$, where $u_{1j}\in e_{1}, j=1,2,\cdot\cdot\cdot,\delta_{e_{1}}$. Similarly, we obtain $u_{ij}\in A$, $i=1,2,\cdot\cdot\cdot,k$, $j=1,2,\cdot\cdot\cdot,\delta_{e_{i}}$, whence $u\in A$. Thus $A=V$.

It follows from the two claims that $S=V$, which is not possible since $\phi(v)\leq 0$ in $\Omega^c$. This contradiction shows that $M\leq 0$.
\end{proof}

\section{Partial differential equations on hypergraphs}\label{pde}

Let $H$ be a connected finite undirected hypergraph or its corresponding symmetric directed hypergraph. For brevity, we call $H$ a connected finite hypergraph in the following. In this section, we investigate several classes of partial differential equations on $H$ by using variational methods.

\subsection{Linear Schr\"{o}dinger equation}

Let $H=(V,E)$ be a connected finite hypergraph. In this subsection, we study the existence and uniqueness of solutions to the following linear Schr\"{o}dinger equation
\begin{align}\label{dirichlet}
\begin{cases}
-\Delta \phi=f, &\text{in}\ \ \Omega,\\
\phi=0, &\text{on}\ \ \partial \Omega,
\end{cases}
\end{align}
where $\Omega$ is a non-empty finite subset of $V$ such that $\Omega^{c}$ is non-empty and $f:\Omega\rightarrow \mathbb{R}$ is a function. 

To study the Dirichlet problem \eqref{dirichlet}, it is natural to consider the function space $W^{1,2}_{0}(\Omega)$, 
which is a Hilbert space with its inner product
$$\langle\phi,\psi\rangle_{W^{1,2}_{0}(\Omega)}=\int_{\Omega\cup\partial\Omega}\nabla\phi\nabla\psi d\mu,\ \ \forall \phi, \psi\in W^{1,2}_{0}(\Omega).$$
The functional $\mathcal{I}: W^{1,2}_{0}(\Omega)\rightarrow\mathbb{R}$ related to \eqref{dirichlet} is defined by
\begin{equation*}
  \mathcal{I}(\phi)=\frac{1}{2}\int_{\Omega\cup\partial\Omega}|\nabla\phi|^2d\mu-\int_{\Omega}f\phi d\mu.
\end{equation*}
If for any $\psi\in C_{c}(\Omega)$, there holds
\begin{equation*}\label{weaksolutiondirichlet}
\int_{\Omega\cup\partial\Omega}\nabla \phi \nabla \psi d\mu=\int_{\Omega}f\psi d\mu, \ \ \phi\in W^{1,2}_{0}(\Omega),
\end{equation*}
then $\phi$ is called a weak solution of $(\ref{dirichlet})$. Clearly,  $\phi\in W^{1,2}_{0}(\Omega)$ is a weak solution of problem $(\ref{dirichlet})$ if and only if $\phi\in W^{1,2}_{0}(\Omega)$ is a critical point of $\mathcal{I}$. Moreover, it is easy to prove that any weak solution of \eqref{dirichlet} is also point-wise solution of \eqref{dirichlet}. We state the existence and uniqueness result as  follows.

\begin{theorem}\label{linear}
Let $\Omega$ be a non-empty finite subset of $V$ such that $\Omega^{c}$ is non-empty. Then for any function $f:\Omega\rightarrow \mathbb{R}$, the Dirichlet problem \eqref{dirichlet} has a unique solution.
\end{theorem}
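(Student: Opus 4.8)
The plan is to use the direct method of the calculus of variations to produce a minimizer of $\mathcal{I}$ (hence a weak, and therefore pointwise, solution), and then to invoke the maximum principle of Lemma~\ref{maxmin} for uniqueness. Since the problem is linear and $W^{1,2}_0(\Omega)$ is finite dimensional, existence could equally well be obtained from the Riesz representation theorem applied to the linear functional $\psi\mapsto\int_\Omega f\psi\,d\mu$ on the Hilbert space $(W^{1,2}_0(\Omega),\langle\cdot,\cdot\rangle_{W^{1,2}_0(\Omega)})$; but the variational route fits the framework of the paper, so I would present that.

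\emph{Existence.} First I would observe that $L(\psi):=\int_\Omega f\psi\,d\mu$ is a bounded linear functional on $W^{1,2}_0(\Omega)$: estimating $|L(\psi)|\le\|f\|_\infty\sum_{v\in\Omega}\mu(v)|\psi(v)|$ and using the equivalence of the norm \eqref{w012norm} with the classical $W^{1,2}_0(\Omega)$-norm (valid here by finite dimensionality, as already noted in the text), one gets $|L(\psi)|\le C\|\psi\|_{W^{1,2}_0(\Omega)}$ for a constant $C=C(\Omega,f,\mu)$. Consequently $\mathcal{I}(\phi)=\tfrac12\|\phi\|_{W^{1,2}_0(\Omega)}^2-L(\phi)\ge\tfrac12\|\phi\|_{W^{1,2}_0(\Omega)}^2-C\|\phi\|_{W^{1,2}_0(\Omega)}$, so $\mathcal{I}$ is bounded below and coercive. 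As $\mathcal{I}$ is continuous on the finite-dimensional space $W^{1,2}_0(\Omega)$, its sublevel sets are compact, so it attains its infimum at some $\phi_0\in W^{1,2}_0(\Omega)$. Differentiating $t\mapsto\mathcal{I}(\phi_0+t\psi)$ at $t=0$ for each $\psi\in C_c(\Omega)$ shows $\phi_0$ is a critical point of $\mathcal{I}$, i.e., $\int_{\Omega\cup\partial\Omega}\nabla\phi_0\nabla\psi\,d\mu=\int_\Omega f\psi\,d\mu$ for all $\psi\in C_c(\Omega)$; thus $\phi_0$ is a weak solution. Testing with $\psi$ equal to $1$ at a single $v\in\Omega$ and $0$ elsewhere and applying the integration-by-parts formula \eqref{fbjf0} gives $-\Delta\phi_0(v)=f(v)$ for every $v\in\Omega$, while $\phi_0\in W^{1,2}_0(\Omega)$ forces $\phi_0=0$ on $\partial\Omega$, so $\phi_0$ solves \eqref{dirichlet} pointwise.

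\emph{Uniqueness.} Given two solutions $\phi_1,\phi_2$, set $\phi:=\phi_1-\phi_2$ and extend it by $0$ to all of $V$. Then $\Delta\phi(v)=0$ for every $v\in\Omega$ (the value $\Delta\phi(v)$ depends only on the values of $\phi$ on hyperedges through $v$, all of whose vertices lie in $\Omega\cup\partial\Omega$, so the zero extension does not affect it), and $\phi=0$ on $\Omega^c$. Applying \eqref{max} of Lemma~\ref{maxmin} to $\phi$ gives $\max_\Omega\phi\le\max_{\Omega^c}\phi=0$, and applying \eqref{min} to $\phi$ gives $\min_\Omega\phi\ge\min_{\Omega^c}\phi=0$; hence $\phi\equiv0$, i.e., $\phi_1=\phi_2$. (Alternatively, uniqueness of the critical point of $\mathcal{I}$ follows from the strict convexity of $\mathcal{I}$, since $\|\cdot\|_{W^{1,2}_0(\Omega)}$ is a genuine norm.) The analytic content is light in this finite-dimensional setting; the only step that needs care is the verification in the uniqueness argument that extending $\phi_1-\phi_2$ by zero leaves $\Delta\phi$ on $\Omega$ unchanged, so that Lemma~\ref{maxmin} genuinely applies — this bookkeeping, rather than any estimate, is the main thing to get right.
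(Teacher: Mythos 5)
Your proposal is correct and follows essentially the same route as the paper: the direct variational method (coercivity plus boundedness from below and lower semicontinuity of $\mathcal{I}$ on the finite-dimensional space $W^{1,2}_{0}(\Omega)$) for existence, and Lemma~\ref{maxmin} applied to the zero-extended difference of two solutions for uniqueness. The only cosmetic difference is that you attain the infimum via compactness of sublevel sets in finite dimensions, where the paper verifies weak lower semicontinuity of $\mathcal{I}$ explicitly.
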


In order to prove Theorem ~\ref{linear}, we present the Sobolev embedding in the following lemma.

\begin{lemma}\label{Sobolevembedding}
Let $\Omega$ be a non-empty finite subset of $V$ such that $\Omega^{c}$ is non-empty. Then $W_{0}^{1,2}(\Omega)$ is compactly embedded into $L^{s}(\Omega)$ for any $s\in [1,+\infty]$. In particular, there exists a constant $C$ depending only on $\Omega$ and $s$ such that for any $\phi\in W_{0}^{1,2}(\Omega)$,
\begin{equation*}
\|\phi\|_{s,\Omega}\leq C\|\phi\|_{W_{0}^{1,2}(\Omega)},
\end{equation*}
where $L^{s}(\Omega)$ is the linear space of functions $\phi:\Omega\rightarrow \mathbb{R} $ with the usual norm $\|\cdot\|_{s,\Omega}$.
Moreover, $W_{0}^{1,2}(\Omega)$ is  pre-compact.
\end{lemma}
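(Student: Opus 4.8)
The plan is to reduce the statement to the fact that $W_0^{1,2}(\Omega)$ is finite-dimensional, after which the norm estimate is merely the equivalence of norms on a finite-dimensional space and the compact embedding is the Heine--Borel theorem. First I would record that each $\phi\in C_c(\Omega)$ is determined by its finitely many values on the finite set $\Omega\cup\partial\Omega$, so $C_c(\Omega)$ is a finite-dimensional real vector space; a finite-dimensional space is complete under any norm, so in fact $W_0^{1,2}(\Omega)=C_c(\Omega)$ as a set, and every element of it may be regarded, as in the paper's convention, as a function on all of $V$ vanishing off $\Omega$.

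The only point not belonging to pure linear algebra is that $\|\cdot\|_{W_0^{1,2}(\Omega)}$ is genuinely a norm, that is, positive-definite; this is the Poincaré-type inequality, and it is where connectedness of $H$ and the hypothesis $\Omega^c\neq\emptyset$ enter. Suppose $\|\phi\|_{W_0^{1,2}(\Omega)}=0$. Then $\nabla\phi(v)=0$ for every $v\in\Omega\cup\partial\Omega$, since $\mu>0$. For $v\in\Omega^c\setminus\partial\Omega$, any hyperedge $e\ni v$ must lie entirely in $\Omega^c$, for otherwise $e$ would contain a vertex of $\Omega$ and $v$ would belong to $\partial\Omega$; as $\phi$ vanishes on $\Omega^c$ this gives $\nabla\phi(v)=0$ there too. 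Hence $\nabla\phi\equiv0$ on $V$, so $\Delta\phi=\mathrm{div}(\nabla\phi)=0$, and Lemma~\ref{constant} forces $\phi$ to be constant; since $\phi=0$ on the non-empty set $\Omega^c$, we get $\phi\equiv0$. Alternatively one may simply invoke the norm equivalence already stated after \eqref{w012norm}. The map $\phi\mapsto\|\phi\|_{s,\Omega}$ is a norm on $W_0^{1,2}(\Omega)$ for each $s\in[1,+\infty]$ for the trivial reason that elements of $C_c(\Omega)$ also vanish on $\partial\Omega$.

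Finally, on the finite-dimensional space $W_0^{1,2}(\Omega)$ the two norms $\|\cdot\|_{W_0^{1,2}(\Omega)}$ and $\|\cdot\|_{s,\Omega}$ are equivalent, which produces a constant $C=C(\Omega,s)$ with $\|\phi\|_{s,\Omega}\leq C\|\phi\|_{W_0^{1,2}(\Omega)}$. For the compact embedding, the inclusion $\iota:W_0^{1,2}(\Omega)\to L^s(\Omega)$ is a bounded linear map whose domain is finite-dimensional, hence it sends bounded sets to relatively compact sets by Heine--Borel; equivalently, any sequence bounded in $\|\cdot\|_{W_0^{1,2}(\Omega)}$ admits a subsequence converging in $L^s(\Omega)$, which is the asserted pre-compactness. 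The main, and essentially only, obstacle is the positive-definiteness step of the middle paragraph, which relies on connectedness of $H$, on $\Omega^c\neq\emptyset$, and on Lemma~\ref{constant}, equivalently on the maximum principle; the rest is routine finite-dimensional functional analysis.
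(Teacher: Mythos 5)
Your proof is correct. The paper itself omits the argument, deferring to the graph-case analogue (Theorem 7 of the cited reference), and your write-up supplies exactly the standard reasoning behind that citation: the only substantive point is the positive-definiteness of $\|\cdot\|_{W_0^{1,2}(\Omega)}$ on the finite-dimensional space $C_c(\Omega)$, which you correctly reduce, via $\Omega^c\neq\emptyset$, the definition of $\partial\Omega$, and the connectedness argument of Lemma~\ref{constant}, to the vanishing of a constant function on $\Omega^c$; everything else is equivalence of norms and Heine--Borel in finite dimensions.
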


\begin{proof}
 Since proof is similar to Theorem 7 in \cite{grigor2017existence}, we omit it here.
\end{proof}

\emph{The proof of Theorem \ref{linear}.}
\vskip4pt

We first prove the existence of solutions to \eqref{dirichlet} by using the direct variational method.
\vskip4pt
{\bf (i) $\mathcal{I}$ is weakly lower semi-continuous.}
\vskip4pt
For any $\phi\in W^{1,2}_{0}(\Omega)$, let
$$Q(\phi)=\int_{\Omega}f\phi d\mu.$$
Then
$$\mathcal{I}(\phi)=\frac{1}{2}\int_{\Omega\cup\partial\Omega}|\nabla\phi|^2d\mu-Q(\phi).$$
We claim that $Q$ is weakly continuous in $W^{1,2}_{0}(\Omega)$. In fact, let $\phi_{n}\rightharpoonup\phi$ in $W^{1,2}_{0}(\Omega)$ as $n\rightarrow\infty$. Then $\phi_{n}\rightharpoonup\phi$ in $L^2(\Omega)$.
Thus for any $\psi\in L^2(\Omega)$, we have
\begin{equation}\label{v0}
  \underset{n\rightarrow\infty}\lim\int_{\Omega}(\phi_{n}-\phi)\psi
  d\mu= \underset{n\rightarrow\infty}\lim\underset{v\in\Omega}\sum~\mu(v)(\phi_{n}(v)-\phi(v))\psi(v)=0.
\end{equation}
Take $v_0\in \Omega$ and
let
\begin{equation*}
	\psi_0(v)=\left\{
	\begin{aligned}
		&1, &&\text{if } v=v_0,\\
		&0, &&\text{if } v\neq v_{0}.
	\end{aligned}
    \right.
\end{equation*}
Obviously, $\psi_{0}$ belongs to $L^2(\Omega)$.
 By substituting $\psi_{0}$ into \eqref{v0}, we obtain
 \begin{equation*}
   \underset{n\rightarrow\infty}\lim\mu(v_0)(\phi_{n}(v_0)-\phi(v_0))=0,
 \end{equation*}
 which implies that $\underset{n\rightarrow\infty}\lim \phi_{n}(v)=\phi(v)$ for any $v\in \Omega.$ Thus
 \begin{equation*}
   \underset{n\rightarrow\infty}\lim [Q(\phi_{n})-Q(\phi)]= \underset{n\rightarrow\infty}\lim\int_{\Omega}f(v)(\phi_{n}(v)-\phi(v))d\mu=0.
 \end{equation*}
 Therefore, $Q$ is weakly continuous in $W^{1,2}_{0}(\Omega)$. Combining this with the weakly lower semi-continuity of the norm $\|\cdot\|_{W^{1,2}_0(\Omega)}$, we know that $\mathcal{I}$ is weakly lower semi-continuous.

\vskip4pt
{\bf (ii) $\mathcal{I}$ is coercive, i.e. $\mathcal{I}(\phi)\rightarrow\infty$ as $\|\phi\|_{W^{1,2}_0(\Omega)}\rightarrow\infty$.}
\vskip4pt

Note that $\Omega$ is finite. Then by Lemma~\ref{Sobolevembedding}, we have
	\begin{align}\label{coercive}
\mathcal{I}(\phi)&=\frac{1}{2}\|\phi\|^2_{W^{1,2}_0(\Omega)}-\int_{\Omega}f\phi d\mu\nonumber\\
&\geq \frac{1}{2}\|\phi\|^2_{W^{1,2}_0(\Omega)}-\int_{\Omega}|f\phi|d\mu\nonumber\\
&\geq \frac{1}{2}\|\phi\|^2_{W^{1,2}_0(\Omega)}-\underset{\Omega}\max |f|\int_{\Omega}|\phi|d\mu\nonumber\\
&\geq \frac{1}{2}\|\phi\|^2_{W^{1,2}_0(\Omega)}-\underset{\Omega}\max |f|C\|\phi\|_{W^{1,2}_0(\Omega)}\\
&\rightarrow +\infty\nonumber
\end{align}
as $\|\phi\|_{W_{0}^{1,2}(\Omega)}\rightarrow\infty.$
\vskip4pt
{\bf (iii) $\mathcal{I}$ is bounded from below.}
\vskip4pt
It follows from \eqref{coercive} that
\begin{align*}
\mathcal{I}(\phi)&\geq \frac{1}{2}\|\phi\|^2_{W^{1,2}_0(\Omega)}-\underset{\Omega}\max |f|C\|\phi\|_{W_0^{1,2}(\Omega)}\\
&=\frac{1}{2}\left(\|\phi\|_{W^{1,2}_0(\Omega)}-\underset{\Omega}\max |f|C\right)^2-\frac{1}{2}\left(\underset{\Omega}\max |f|C\right)^2\\
&\geq-\frac{1}{2}\left(\underset{\Omega}\max |f|C\right)^2,
\end{align*}
which implies that $\mathcal{I}$ is bounded from below.

Combining (i), (ii) and (iii), we obtain that there exists $\phi^*\in W^{1,2}_{0}(\Omega)$ such that 
$$\mathcal{I}(\phi^*)=\underset{\phi\in W^{1,2}_{0}(\Omega)}\inf \mathcal{I}(\phi)$$
and thus $\phi^*$ is a solution of \eqref{dirichlet}.

Now, we prove the uniqueness of the solution to \eqref{dirichlet} by contradiction. Here we stipulate that if $\phi=0$ on $\partial\Omega$, then $\phi=0$ on $\Omega^c$. Without loss of generality, we assume that there are two different solutions $\phi_1$ and $\phi_2$ of \eqref{dirichlet}. Then $\phi=\phi_1-\phi_2$ satisfies
\begin{align*}
\begin{cases}
-\Delta \phi=0, \  &\text{in}\ \ \Omega,\\
\phi=0,\  &\text{on} \ \  \partial\Omega.
\end{cases}
\end{align*}
It follows from Lemma~\ref{maxmin} that $\phi\equiv 0$, and thus $\phi_1=\phi_2$. We complete the proof of Theorem~\ref{linear}.

\subsection{Yamabe type equation}

In this subsection, we consider the existence of solutions to the following Yamabe equation
\begin{align}\label{Yamabe}
\begin{cases}
-\Delta \phi-\alpha \phi=|\phi|^{p-2}\phi, \  &\text{in}\ \ \Omega,\\
\phi=0,\  &\text{on} \ \  \partial\Omega,
\end{cases}
\end{align}
where $H=(V,E)$ is a connected finite hypergraph, $\Omega$ is a non-empty finite subset of $V$ such that $\Omega^{c}$ is non-empty, $p>2$ and 
$$\alpha<\lambda_{1}(\Omega)=\underset{\phi\not\equiv 0,\phi|_{\partial\Omega}=0}\inf\frac{\int_{\Omega\cup\partial\Omega}|\nabla \phi|^2d\mu}{\int_{\Omega}\phi^2d\mu}.$$ 

We state the existence result as follows.
\begin{theorem}\label{existence}
If $\alpha<\lambda_{1}(\Omega)$, then the equation \eqref{Yamabe} admits a nontrivial solution for any $p>2$.
\end{theorem}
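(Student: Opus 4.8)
The plan is to obtain a nontrivial critical point of the energy functional
\[
\mathcal{J}(\phi)=\frac12\int_{\Omega\cup\partial\Omega}|\nabla\phi|^2\,d\mu-\frac{\alpha}{2}\int_{\Omega}\phi^2\,d\mu-\frac1p\int_{\Omega}|\phi|^p\,d\mu,\qquad \phi\in W^{1,2}_0(\Omega),
\]
by means of the Mountain Pass Theorem. As in Subsection 4.1 one checks that $\mathcal{J}\in C^1(W^{1,2}_0(\Omega),\mathbb{R})$ with
\[
\langle \mathcal{J}'(\phi),\psi\rangle=\int_{\Omega\cup\partial\Omega}\nabla\phi\nabla\psi\,d\mu-\alpha\int_{\Omega}\phi\psi\,d\mu-\int_{\Omega}|\phi|^{p-2}\phi\psi\,d\mu ,
\]
and, testing against the functions $\psi_v$, that every critical point of $\mathcal{J}$ is a pointwise solution of \eqref{Yamabe}; hence it suffices to produce a nonzero critical point. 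The first step is to exploit $\alpha<\lambda_1(\Omega)$: the variational characterization of $\lambda_1(\Omega)$ gives $\int_{\Omega}\phi^2 d\mu\le \lambda_1(\Omega)^{-1}\|\phi\|^2_{W^{1,2}_0(\Omega)}$, so the quadratic form $\|\phi\|_*^2:=\|\phi\|^2_{W^{1,2}_0(\Omega)}-\alpha\int_{\Omega}\phi^2 d\mu$ satisfies $c_0\|\phi\|^2_{W^{1,2}_0(\Omega)}\le\|\phi\|_*^2\le c_1\|\phi\|^2_{W^{1,2}_0(\Omega)}$ for some $c_0,c_1>0$; that is, $\|\cdot\|_*$ is an equivalent norm and $\mathcal{J}(\phi)=\tfrac12\|\phi\|_*^2-\tfrac1p\int_{\Omega}|\phi|^p d\mu$.

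Next I would verify the mountain pass geometry. Clearly $\mathcal{J}(0)=0$. By the Sobolev embedding (Lemma~\ref{Sobolevembedding}), $\int_{\Omega}|\phi|^p d\mu\le C'\|\phi\|_*^p$, so $\mathcal{J}(\phi)\ge \tfrac12\|\phi\|_*^2-\tfrac{C'}{p}\|\phi\|_*^p$; since $p>2$, there exist $\rho,\beta>0$ with $\mathcal{J}(\phi)\ge\beta$ whenever $\|\phi\|_*=\rho$. Fixing any $\phi_0\in W^{1,2}_0(\Omega)\setminus\{0\}$ (so $\int_{\Omega}|\phi_0|^p d\mu>0$, because $\phi_0\equiv 0$ on $\Omega^c$), we have $\mathcal{J}(t\phi_0)=\tfrac{t^2}{2}\|\phi_0\|_*^2-\tfrac{t^p}{p}\int_{\Omega}|\phi_0|^p d\mu\to-\infty$ as $t\to+\infty$, so some $e:=t_0\phi_0$ satisfies $\|e\|_*>\rho$ and $\mathcal{J}(e)<0<\beta$.

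Then I would check the Palais--Smale condition. If $\mathcal{J}(\phi_n)\to c$ and $\mathcal{J}'(\phi_n)\to 0$, then
\[
\mathcal{J}(\phi_n)-\frac1p\langle\mathcal{J}'(\phi_n),\phi_n\rangle=\Big(\frac12-\frac1p\Big)\|\phi_n\|_*^2 ,
\]
and since $p>2$ this forces $\|\phi_n\|_*$ to be bounded. Because $W^{1,2}_0(\Omega)$ is finite dimensional (Lemma~\ref{Sobolevembedding}), a subsequence converges, $\phi_n\to\phi^*$, and continuity of $\mathcal{J}'$ yields $\mathcal{J}'(\phi^*)=0$; thus $(PS)_c$ holds at every level. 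Applying the Mountain Pass Theorem with $\Gamma=\{\gamma\in C([0,1],W^{1,2}_0(\Omega)):\gamma(0)=0,\ \gamma(1)=e\}$ produces a critical point $\phi^*$ at level $c=\inf_{\gamma\in\Gamma}\max_{t\in[0,1]}\mathcal{J}(\gamma(t))\ge\beta>0$; since $\mathcal{J}(0)=0<\beta$, $\phi^*\not\equiv 0$, and $\phi^*$ solves \eqref{Yamabe}.

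The only genuinely delicate point is the first step: the hypothesis $\alpha<\lambda_1(\Omega)$ is precisely what makes $\phi\mapsto\|\phi\|_*^2$ positive definite, i.e.\ an equivalent norm, and this is what simultaneously gives the strictly positive mountain pass level $\beta>0$ and the boundedness of Palais--Smale sequences; everything else is routine thanks to the finite dimensionality of $W^{1,2}_0(\Omega)$. Alternatively, one can avoid the Mountain Pass Theorem and argue by constrained minimization: minimize $\|\phi\|_*^2$ over $\mathcal{M}=\{\phi\in W^{1,2}_0(\Omega):\int_{\Omega}|\phi|^p d\mu=1\}$, which is attained since $\mathcal{M}$ is closed and $W^{1,2}_0(\Omega)$ finite dimensional; the Lagrange multiplier $\Lambda=\inf_{\mathcal{M}}\|\cdot\|_*^2$ is positive again because $\alpha<\lambda_1(\Omega)$, and the rescaled function $\Lambda^{1/(p-2)}\phi^*$ is a nontrivial solution of \eqref{Yamabe}.
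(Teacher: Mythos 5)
Your proof is correct and follows essentially the same route as the paper: the mountain pass theorem applied to the same functional, with the hypothesis $\alpha<\lambda_{1}(\Omega)$ used exactly as in the paper to secure both the strictly positive mountain pass level and the boundedness of Palais--Smale sequences (your equivalent norm $\|\cdot\|_{*}$ is just a tidier packaging of the paper's estimate $\alpha\le\lambda_{1}(\Omega)-\tau$ combined with the Sobolev embedding, and your use of an arbitrary nonzero $\phi_{0}$ in place of the paper's indicator function of a single vertex is immaterial). The constrained-minimization alternative you sketch at the end is a genuinely different and equally valid route, but your main argument coincides with the paper's.
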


It is suitable to study the equation \eqref{Yamabe} on the space $W^{1,2}_{0}(\Omega)$. The functional related to \eqref{Yamabe} is defined by
\begin{equation*}
	\mathcal{E}(\phi)=\frac{1}{2}\int_{\Omega\cup\partial\Omega}|\nabla \phi|^2d\mu-\frac{1}{2}\int_{\Omega}\alpha \phi^2d\mu-\frac{1}{p}\int_\Omega |\phi|^pd\mu,\ \ \phi\in W^{1,2}_0(\Omega).
\end{equation*}
For any $\psi\in W^{1,2}_0(\Omega)$,
\begin{equation*}
\langle \mathcal{E}^\prime(\phi),\psi\rangle=\int_{\Omega\cup\partial\Omega}\nabla \phi\nabla\psi d\mu-\alpha\int_\Omega \phi\psi d\mu-\int_\Omega|\phi|^{p-2}\phi\psi d\mu,\ \ \phi\in W^{1,2}_0(\Omega).
\end{equation*}
Clearly, $\phi\in W^{1,2}_0(\Omega)$ is a weak solution of the equation $\eqref{Yamabe}$ if and only if $\phi$ is a critical point of $\mathcal{E}$.

\vskip8pt
\emph{The proof of Theorem \ref{existence}.}~~Our proof is divided into three steps.
\vskip4pt

\textbf{Step 1. There exist two constants $\xi>0$ and $\rho>0$ such that $\mathcal{E}(\phi)\geq \xi$ for all $\phi$ with $\|\phi\|_{W^{1,2}_0(\Omega)}=\rho$.}

Since $\alpha<\lambda_{1}(\Omega)$, there exists $\tau>0$ such that $\alpha\leq \lambda_1(\Omega)-\tau$. By simple calculations, we have
\begin{align*}
  \mathcal{E}(\phi) &= \frac{1}{2}\|\phi\|^2_{W^{1,2}_0(\Omega)}-\frac{1}{2}\alpha\|\phi\|^2_{2,\Omega}-\frac{1}{p}\|\phi\|^{p}_{p,\Omega} \\
   &\geq\frac{1}{2}\|\phi\|^2_{W^{1,2}_0(\Omega)}-\frac{\lambda_{1}(\Omega)-\tau}{2}\|\phi\|^2_{2,\Omega}-\frac{1}{p}\|\phi\|^{p}_{p,\Omega}\\
   &\geq\frac{1}{2}\|\phi\|^2_{W^{1,2}_0(\Omega)}-\frac{\lambda_{1}(\Omega)-\tau}{2\lambda_{1}(\Omega)}\|\phi\|^2_{W^{1,2}_0(\Omega)}
   -\frac{1}{p}C\|\phi\|^{p}_{W^{1,2}_0(\Omega)}\\
   &=\frac{\tau}{2\lambda_{1}(\Omega)}\|\phi\|^2_{W^{1,2}_0(\Omega)}-\frac{1}{p}C\|\phi\|^{p}_{W^{1,2}_0(\Omega)}\\
   &=\|\phi\|^2_{W^{1,2}_0(\Omega)}\left(\frac{\tau}{2\lambda_{1}(\Omega)}-\frac{1}{p}C\|\phi\|^{p-2}_{W^{1,2}_0(\Omega)}\right).
\end{align*}
Taking $\rho=\left(\frac{p\tau}{4\lambda_{1}(\Omega)C}\right)^{\frac{1}{p-2}}$, then we have
\begin{equation*}
  \mathcal{E}(\phi)\geq \frac{\tau}{4\lambda_{1}(\Omega)}\rho^2:=\xi
\end{equation*}
for all $\phi\in W^{1,2}(\Omega)$ satisfying $\|\phi\|_{W^{1,2}(\Omega)}=\rho$.

\vskip4pt
\textbf{Step 2. There exists some non-negative function $\phi\in W^{1,2}_0(\Omega)$ such that $\mathcal{E}(t\phi)\ra-\infty$ as $t\ra+\infty$.}
\vskip4pt
Let $v_0\in \Omega$ be a fixed point and
\begin{equation*}
	\phi(v)=\left\{
	\begin{aligned}
		&1, &&\text{if } v=v_0,\\
		&0, &&\text{if } v\neq v_{0}.
	\end{aligned}
    \right.
\end{equation*}
Then we have
\begin{align*}
\mathcal{E}(t\phi)&=\f{t^2}{2}\|\phi\|^2_{W^{1,2}_0(\Omega)}-\f{t^2}{2}\alpha\|\phi\|^2_{2,\Omega}-\f{t^p}{p}\|\phi\|^p_{p,\Omega}\\
&=\f{t^2}{2}\sum_{v\in \Omega\cup\partial\Omega}\sum_{e\in E:v\in e}\frac{\omega_e}{\delta_e(\delta_e-1)}\left[\sum_{u\in e}
(\phi(u)-\phi(v))\right]^2-\f{t^2}{2}\alpha\mu(v_{0})
-\f{t^p}{p}\mu(v_{0})\\
&\ra-\infty
\end{align*}
 as $t\ra+\infty$, since $\Omega$ is finite, $p>2$ and $\mu(v_0)>0$.

\vskip4pt
\textbf{Step 3. $\mathcal{E}$ satisfies the $(PS)_c$ condition.}
\vskip4pt

Let $\{\phi_k\}\subset W^{1,2}_0(\Omega)$ be a sequence satisfying $\mathcal{E}(\phi_k)\ra c$ and $\mathcal{E}^\prime(\phi_k)\ra 0$ as $k\rightarrow \infty$. Note that $\alpha<\lambda_{1}(\Omega)$, then there exists $\tau>0$ such that $\alpha\leq \lambda_{1}(\Omega)-\tau$.
Thus we have
\begin{align*}
c+1+\|\phi_k\|_{W^{1,2}_0(\Omega)} &\geq \mathcal{E}(\phi_k)-\frac{1}{p}\langle \mathcal{E}^\prime(\phi_k),\phi_k \rangle \\
 &= \left(\frac{1}{2}-\frac{1}{p}\right)\|\phi_k\|^2_{W^{1,2}_0(\Omega)}
 -\left(\frac{1}{2}-\frac{1}{p}\right)\alpha\|\phi_k\|^2_{2,\Omega}\\
 &\geq\left(\frac{1}{2}-\frac{1}{p}\right)\|\phi_k\|^2_{W^{1,2}_0(\Omega)}
 -\left(\frac{1}{2}-\frac{1}{p}\right)(\lambda_{1}(\Omega)-\tau)\|\phi_k\|^2_{2,\Omega}\\
 &\geq\left(\frac{1}{2}-\frac{1}{p}\right)\|\phi_k\|^2_{W^{1,2}_0(\Omega)}
 -\left(\frac{1}{2}-\frac{1}{p}\right)\frac{\lambda_{1}(\Omega)-\tau}{\lambda_{1}(\Omega)}\|\phi_k\|^2_{W^{1,2}_0(\Omega)}\\
 &=\left(\frac{1}{2}-\frac{1}{p}\right)\frac{\tau}{\lambda_{1}(\Omega)}\|\phi_k\|^2_{W^{1,2}_0(\Omega)},
 \end{align*}
which implies that $\{\phi_k\}$ is bounded in $W^{1,2}_0(\Omega)$. By Lemma \ref{Sobolevembedding}, up to a subsequence, there exists some $\phi\in W^{1,2}_0(\Omega)$ such that $\phi_k\rightarrow \phi$ in $W^{1,2}_0(\Omega)$ as $k\rightarrow \infty$.

By Steps 1, 2 and 3, $\mathcal{E}$ satisfies all the hypotheses of
the mountain pass theorem in \cite{willem1997minimax}. Then we conclude that
  $$\hat{c}=\min_{\gamma\in\Gamma}\max_{t\in[0,1]}\mathcal{E}(\gamma(t))$$
  is the critical level of $\mathcal{E}$, where
  $$\Gamma=\le\{\g\in C([0,1],W^{1,2}_0(\Omega)): \g(0)=0, \mathcal{E}(\g(1))<0\ri\}.$$
Thus, there exists $\phi^{*}\in W^{1,2}_0(\Omega)$ satisfying $\mathcal{E}(\phi^{*})=\hat{c}\geq \xi>0$ and $\mathcal{E}'(\phi^{*})=0$. Therefore, $\phi^{*}$ is a  nontrivial solution of equation \eqref{Yamabe}.

\subsection{Nonlinear Schr\"{o}dinger equation}

In this subsection, we consider the existence of positive solutions to the following nonlinear Schr\"{o}dinger equation
\begin{equation}\label{schrodinger}
  -\Delta \phi+h(v)\phi=f(v,\phi)
\end{equation}
on a connected finite hypergraph $H$, where $h$ is a positive function defined on $V$. Since $V$ is finite, there exists some constant $h_0>0$ such that $h\geq h_0$ for all $v\in V$.  The nonlinear term $f:V\times \mathbb{R}\rightarrow\mathbb{R}$ satisfies the following hypotheses:
\begin{itemize}
  \item [$(f_1)$] \emph{For any $v\in V$, $f(v,s)$ is continuous in $s\in \mathbb{R}$.}
  \item [$(f_2)$] \emph{For all $(v,s)\in V\times [0,+\infty)$, $f(v,s)\geq0$ and $f(v,0)=0$ for all $v\in V$.}
  \item [$(f_3)$] \emph{There exist $\theta>2$ and $s_0>0$ such that if $s\geq s_0$, then there holds}
\begin{equation*}
  F(v,s)=\int^{s}_{0}f(v,t)dt\leq \frac{1}{\theta}sf(v,s), \ \ \forall v\in V.
\end{equation*}
\item  [$(f_4)$] \emph{For any $v\in V$, there holds}
\begin{equation*}
  \underset{s\rightarrow0^{+}}\limsup\frac{f(v,s)}{s}<\lambda_{1}(V)=\underset{\phi\not\equiv0}\inf\frac{\int_{V}
  (|\nabla \phi|^2+h(v)\phi^2)d\mu}{\int_{V}\phi^2d\mu}.
\end{equation*}
\end{itemize}

Our main result is as follows.

\begin{theorem}\label{pos-existence}
Let $H$ be a connected finite hypergraph. Assume that $(f_1)-(f_4)$ hold. Then the equation \eqref{schrodinger} admits a positive solution.
\end{theorem}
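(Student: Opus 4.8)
\textbf{Proof plan for Theorem \ref{pos-existence}.}
The plan is to apply the mountain pass theorem to the energy functional associated with \eqref{schrodinger} on the finite-dimensional Hilbert space $W^{1,2}(V)$ (equipped with the norm $\|\phi\|^2 = \int_V (|\nabla\phi|^2 + h(v)\phi^2)\,d\mu$, which is equivalent to the $L^2$ norm since $V$ is finite and $h\geq h_0>0$), and then to upgrade the resulting solution to a positive one via the strong maximum principle of Lemma \ref{Strong1}. To handle positivity from the start, I would first replace $f(v,s)$ by $\tilde f(v,s) := f(v,s)$ for $s\geq 0$ and $\tilde f(v,s):=0$ for $s<0$; by $(f_1)$ and $(f_2)$ this modified nonlinearity is still continuous, and any nontrivial critical point $\phi$ of the modified functional satisfies $-\Delta\phi + h(v)\phi = \tilde f(v,\phi)\geq 0$, so testing against $\phi^-=\min\{\phi,0\}$ and using \eqref{laplace1} (exactly as in Lemma \ref{Wmp}) forces $\phi\geq 0$; then $\phi$ solves the original equation and, since $\phi\not\equiv 0$ and $-\Delta\phi + h(v)\phi\geq 0$ with $\phi\geq 0$, Lemma \ref{Strong1} (applied after absorbing the sign: note $h>0$ plays the role of $c(v)$) gives $\phi>0$ on $V$.

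For the mountain pass geometry, set $J(\phi)=\frac12\|\phi\|^2 - \int_V \tilde F(v,\phi)\,d\mu$ where $\tilde F(v,s)=\int_0^s \tilde f(v,t)\,dt$. The geometry near zero follows from $(f_4)$: there exist $\varepsilon>0$ and $\lambda_1(V)-\varepsilon > \limsup_{s\to 0^+} f(v,s)/s$ uniformly over the finite set $V$, so $\tilde F(v,s)\leq \frac{\lambda_1(V)-\varepsilon}{2}s^2 + C_\delta |s|^\theta$ type bounds hold; combined with the definition of $\lambda_1(V)$ (giving $\int_V \phi^2\,d\mu \leq \lambda_1(V)^{-1}\|\phi\|^2$) and the equivalence of all norms on the finite-dimensional space, one gets $J(\phi)\geq \xi>0$ on a small sphere $\|\phi\|=\rho$. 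For the second geometric condition, pick $\phi_0$ supported at a single vertex $v_0$ with $\phi_0(v_0)=1$; $(f_3)$ gives $\tilde F(v,s)\geq c_1 s^\theta - c_2$ for $s\geq 0$ large, so $J(t\phi_0)\leq \frac{t^2}{2}\|\phi_0\|^2 - c_1 t^\theta \mu(v_0) + c_2 |V| \to -\infty$ as $t\to+\infty$ since $\theta>2$.

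For the Palais--Smale condition, take $\{\phi_k\}$ with $J(\phi_k)\to c$ and $J'(\phi_k)\to 0$; the standard computation $J(\phi_k) - \frac1\theta\langle J'(\phi_k),\phi_k\rangle \geq (\frac12-\frac1\theta)\|\phi_k\|^2 - (\text{bounded terms from the region } \{\phi_k < s_0\}, \text{ which is controlled because } \tilde F \text{ and } s\tilde f \text{ are bounded there by continuity and finiteness of } V)$ yields boundedness of $\{\phi_k\}$ in $W^{1,2}(V)$; since this space is finite-dimensional, a subsequence converges strongly, and continuity of $\tilde f$ gives $J'(\phi)=0$ in the limit — so $(PS)_c$ holds at every level. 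The mountain pass theorem then produces a critical point $\phi^*$ with $J(\phi^*)=\hat c\geq\xi>0$, hence $\phi^*\not\equiv 0$, and the positivity argument above finishes the proof. The main obstacle I anticipate is the bookkeeping in the Palais--Smale step: the Ambrosetti--Rabinowitz condition $(f_3)$ only holds for $s\geq s_0$, so one must carefully separate the vertices where $\phi_k$ is small (finitely many values, all uniformly bounded by continuity) from those where it is large, and confirm that the finite-dimensionality genuinely rescues compactness even though $f$ has no growth restriction at infinity.
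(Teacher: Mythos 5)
Your plan is essentially the paper's proof: the paper also truncates the nonlinearity by working with $f(v,\phi^{+})$ (your $\tilde f$), runs the mountain pass theorem on the finite-dimensional space $W^{1,2}(V)$ with exactly the same geometry (smallness near $0$ from $(f_4)$, a single-vertex test function and $(f_3)$ for the unbounded direction, finite-dimensionality for $(PS)_c$), and then upgrades the nontrivial critical point to a positive solution via the maximum principle. Your handling of the two delicate points --- separating $\{\phi_k<s_0\}$ in the Palais--Smale boundedness estimate, and invoking Lemma \ref{Strong1} for strict positivity --- is, if anything, slightly more careful than the paper's own write-up.
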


In order to obtain the positive solution of \eqref{schrodinger}, we consider the following equation
\begin{equation}\label{pos}
  -\Delta \phi+h(v)\phi=f(v,\phi^+)\ \ \hbox{in} \ \ V,
\end{equation}
where $\phi^{+}=\max\{\phi,0\}$. Then by Lemma~\ref{Wmp} (the weak maximum principle), we can obtain the following conclusion.

\begin{lemma}\label{pos1} If $f$ satisfies $(f_{2})$ and $\phi$ is a nontrivial solution of the equation \eqref{pos}, then $\phi$ is a strictly positive solution of the equation \eqref{schrodinger}.
\end{lemma}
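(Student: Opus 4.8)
The plan is to derive the conclusion in three short steps, using the weak maximum principle (Lemma~\ref{Wmp}) to obtain nonnegativity of $\phi$ and then the strong maximum principle (Lemma~\ref{Strong1}) to upgrade this to strict positivity.

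First I would note that $\phi^{+}=\max\{\phi,0\}\geq 0$ on $V$, so hypothesis $(f_{2})$ gives $f(v,\phi^{+}(v))\geq 0$ for every $v\in V$. Consequently \eqref{pos} yields $-\Delta\phi+h(v)\phi=f(v,\phi^{+})\geq 0$ on $V$. Since $h$ is a positive function on $V$ (in fact $h\geq h_{0}>0$), Lemma~\ref{Wmp} applies with $c(v)=h(v)$ and gives $\phi\geq 0$ on $V$. Once $\phi\geq 0$ we have $\phi^{+}=\phi$, so the right-hand side of \eqref{pos} becomes $f(v,\phi)$; hence $\phi$ satisfies $-\Delta\phi+h(v)\phi=f(v,\phi)$ on $V$, i.e. $\phi$ is a nonnegative solution of \eqref{schrodinger}.

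Finally I would rule out interior zeros. Suppose there were $v_{0}\in V$ with $\phi(v_{0})=0$. Since $\phi\geq 0$ and, by $(f_{2})$ again, $f(v,\phi(v))\geq 0$, the relation $-\Delta\phi+h(v)\phi=f(v,\phi)\geq 0$ holds with $h\geq 0$; Lemma~\ref{Strong1} then forces $\phi\equiv 0$ on $V$, contradicting the assumption that $\phi$ is nontrivial. Therefore $\phi(v)>0$ for all $v\in V$, which is exactly the claim. I do not expect a serious obstacle: the only points needing a little care are verifying that the sign condition $(f_{2})$ suffices to cast \eqref{pos} in the form required by Lemmas~\ref{Wmp} and~\ref{Strong1}, and observing that the strict positivity of $h$ lets one legitimately take $c(v)=h(v)>0$ in the weak maximum principle.
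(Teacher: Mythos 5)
Your proof is correct, and its first half coincides with the paper's: apply the weak maximum principle (Lemma~\ref{Wmp}) with $c(v)=h(v)>0$ to the right-hand side $f(v,\phi^{+})\geq 0$ guaranteed by $(f_2)$, conclude $\phi\geq 0$, and hence $\phi^{+}=\phi$. Where you diverge is in ruling out zeros: you invoke the strong maximum principle (Lemma~\ref{Strong1}) with $c=h\geq 0$, which applies verbatim since $\phi\geq0$ and $-\Delta\phi+h\phi=f(v,\phi)\geq0$, and immediately forces $\phi\equiv0$ if $\phi$ vanishes anywhere, contradicting nontriviality. The paper instead argues directly at a minimum point $v^*$ with $\phi(v^*)=0$, asserting $\Delta\phi(v^*)>0$ to reach a contradiction with $-\Delta\phi(v^*)=f(v^*,0)=0$; as written that assertion is not justified (at such a point one only knows $\Delta\phi(v^*)\geq0$, with equality precisely when all vertices sharing a hyperedge with $v^*$ also vanish, after which one must propagate by connectedness — which is exactly the content of Lemma~\ref{Strong1}). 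So your route is not only valid but cleaner: it reuses a lemma the paper has already proved carefully rather than re-deriving a special case of it with a small gap.
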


\begin{proof}
Suppose that $\phi$ is a nontrivial solution of \eqref{pos}. By $(f_2)$, we know that $f(v,\phi^{+})\geq 0$. Then applying Lemma~\ref{Wmp} to the equation \eqref{pos}, we have $\phi(v)\geq 0$ for all $v\in V$. We now prove $\phi(v)>0$ for all $v\in V$. Suppose not, there exists a point $v^*\in V$ such that $\phi(v^*)=0=\underset{v\in V}\min\phi$ and $\Delta\phi(v^*)>0$. It follows that
\begin{equation*}
  0>-\Delta\phi(v^*)=f(v^*,\phi(v^*))=0,
\end{equation*}
which is a contradiction, and implies $\phi(v)>0$ for all $v\in V$. Thus $\phi$ is a positive solution of \eqref{schrodinger}.
\end{proof}

According to Lemma \ref{pos1}, to prove Theorem~\ref{pos-existence}, we only need to prove that the equation \eqref{pos} has a nontrivial solution. Now, we will prove the existence of nontrivial solutions to \eqref{pos} by using the mountain pass theorem.

Let $W^{1,2}(V)$ be defined as a set of all functions $\phi:V\rightarrow \mathbb{R}$ under the norm
\begin{equation}\label{norm}
  \|\phi\|_{W^{1,2}(V)}=\left(\int_V(|\nabla \phi|^2+h(v)\phi^2)d\mu\right)^{1/2}.
\end{equation}
Since $h$ is a positive function and $V$ is a finite set, \eqref{norm} is a norm equivalent to the standard norm $\|\phi\|=\left(\int_V(|\nabla \phi|^2+\phi^2)d\mu\right)^{1/2}$ on $W^{1,2}(V)$.
Clearly, the space $W^{1,2}(V)$ is a Hilbert space with its inner product
$$\langle\phi,\psi\rangle_{W^{1,2}(V)}=\int_{V}(\nabla\phi\nabla\psi+h(v)\phi\psi)d\mu,\ \ \forall \phi, \psi\in W^{1,2}(V).$$
The corresponding energy functional of the equation \eqref{pos} is defined by 
\begin{equation*}
  \mathcal{J}(\phi)=\frac{1}{2}\int_{V}(|\nabla\phi|^2+h(v)\phi^2) d\mu-\int_{V}F(v,\phi^{+})d\mu, \ \ \phi\in W^{1,2}(V).
\end{equation*}
It is easy to verify that $\mathcal{J}\in C^{1}(W^{1,2}(V),\mathbb{R})$ and for any $\psi\in W^{1,2}(V)$, 
\begin{equation*}
\langle \mathcal{J}^\prime(\phi),\psi\rangle=\int_{V}(\nabla\phi\nabla\psi +\phi\psi) d\mu-\int_V f(v,\phi^{+})\psi d\mu.
\end{equation*}
Obviously, $\phi\in W^{1,2}(V)$ is a weak solution of the equation $\eqref{pos}$ if and only if $\phi\in W^{1,2}(V)$ is a critical point of $\mathcal{J}$.  Here, we give the following Sobolev embedding theorem, which will be used later.

\begin{lemma}\label{sobolevembedding}
Let $H$ be a connected finite hypergraph. Then $W^{1,2}(V)$ is compactly embedded into $L^{q}(V)$ for any $q\in [1,+\infty]$. In particular, there exists a constant $C$ depending only on $V$, $\mu_{\min}$ and $q$ such that for any $\phi\in W^{1,2}(V)$,
\begin{equation}\label{Sin}
\|\phi\|_{q}\leq C\|\phi\|_{W^{1,2}(V)},
\end{equation}
where $\mu_{\min}=\underset{v\in V}\min\mu(v)$. Moreover, $W^{1,2}(V)$ is  pre-compact.
\end{lemma}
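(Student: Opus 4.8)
The plan is to use that $V$ is finite, so $W^{1,2}(V)$ — which by definition is the space of \emph{all} functions $\phi:V\to\mathbb{R}$ — is a finite-dimensional normed space of dimension $|V|$. Both assertions of the lemma then reduce to the equivalence of norms on a finite-dimensional space, once one records an elementary pointwise estimate.

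First I would prove the pointwise bound. Since $h$ is positive and $V$ is finite, $h_0:=\min_V h>0$, and for each $v\in V$,
\[
\mu(v)\,|\phi(v)|^2\le\int_V\phi^2\,d\mu\le\frac{1}{h_0}\int_V h(v)\phi^2\,d\mu\le\frac{1}{h_0}\,\|\phi\|_{W^{1,2}(V)}^2 .
\]
Hence $|\phi(v)|\le (h_0\mu(v))^{-1/2}\|\phi\|_{W^{1,2}(V)}\le (h_0\mu_{\min})^{-1/2}\|\phi\|_{W^{1,2}(V)}$, and taking the maximum over $v\in V$ gives $\|\phi\|_\infty\le C\|\phi\|_{W^{1,2}(V)}$, i.e. the case $q=+\infty$. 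For $1\le q<\infty$ I would then estimate
\[
\|\phi\|_q^q=\sum_{v\in V}\mu(v)\,|\phi(v)|^q\le\Big(\sum_{v\in V}\mu(v)\Big)\|\phi\|_\infty^q ,
\]
so that $\|\phi\|_q\le \big(\sum_{v\in V}\mu(v)\big)^{1/q}\|\phi\|_\infty\le C\|\phi\|_{W^{1,2}(V)}$, which is \eqref{Sin}; the resulting constant depends only on $q$, on $\mu_{\min}$ and $\sum_{v}\mu(v)$, and on $h_0$, all of which are fixed by the data $V,\mu,h$.

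For the compactness and pre-compactness statement, I would argue that $\|\cdot\|_{W^{1,2}(V)}$ is a genuine norm on $W^{1,2}(V)\cong\mathbb{R}^{|V|}$ — this is exactly where $h\ge h_0>0$ enters, since it forces $\|\phi\|_{W^{1,2}(V)}=0\Rightarrow\phi\equiv0$ — so that any sequence bounded in $W^{1,2}(V)$ has, by the Bolzano--Weierstrass theorem in $\mathbb{R}^{|V|}$, a subsequence converging in $W^{1,2}(V)$; by the inequality just established, the same subsequence converges in $L^q(V)$. This shows simultaneously that the inclusion $W^{1,2}(V)\hookrightarrow L^q(V)$ is compact and that bounded subsets of $W^{1,2}(V)$ are pre-compact.

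There is no real difficulty here: the only points needing care are verifying that $\|\cdot\|_{W^{1,2}(V)}$ is a norm rather than merely a seminorm, and bookkeeping the embedding constant. One could equally well bypass the explicit estimates and invoke the equivalence of $\|\cdot\|_{W^{1,2}(V)}$ with the standard $W^{1,2}$-norm noted immediately before the lemma, together with the graph version of this result, Theorem 7 in \cite{grigor2017existence}, whose proof transfers verbatim to the hypergraph setting.
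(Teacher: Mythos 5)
Your proof is correct and follows the same route as the paper, which simply observes that $W^{1,2}(V)$ is finite-dimensional since $V$ is finite and declares the conclusions obvious; you have merely filled in the details (the pointwise bound via $h\ge h_0>0$, the $L^q$ estimate, and Bolzano--Weierstrass for compactness) that the paper leaves implicit. The only minor remark is that your embedding constant also depends on $h_0$ and $\sum_{v}\mu(v)$, which is harmless but slightly more than the ``$V$, $\mu_{\min}$ and $q$'' claimed in the statement.
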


\begin{proof} Since $V$ is a finite set, $W^{1,2}(V)$ is a finite dimensional space. Hence the conclusions of the lemma obviously hold.
\end{proof}

Next, we prove that $\mathcal{J}$ satisfies the mountain pass geometry.

\begin{lemma}\label{mp1} Assume that $(f_1)-(f_4)$ hold. Then $\mathcal{J}$ satisfies the mountain pass geometry. Namely,
\begin{enumerate}[\upshape (i)]
  \item  there exist positive constants $\delta$,$r$ such that $\mathcal{J}\geq\delta$ for all functions $\phi$ with $||\phi||_{W^{1,2}(V)}=r$;
  \item  there exists some non-negative function $\phi\in W^{1,2}(V)$ such that $\mathcal{J}(t\phi)\rightarrow -\infty$ as $t\rightarrow+\infty.$
\end{enumerate}
\end{lemma}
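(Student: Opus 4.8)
The plan is to verify the two mountain pass geometry conditions directly, using the Sobolev embedding (Lemma~\ref{sobolevembedding}) and the structural hypotheses on $f$.

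For part (i), I would first exploit $(f_4)$: since $\limsup_{s\to 0^+} f(v,s)/s < \lambda_1(V)$ for every $v\in V$ and $V$ is finite, there exist $\varepsilon>0$ and $\sigma>0$ such that $f(v,s)\le (\lambda_1(V)-\varepsilon)\,s$ for all $v\in V$ and all $0\le s\le \sigma$. Together with $(f_1)$ and $(f_2)$ (so that $f(v,\cdot)$ is continuous and nonnegative on $[0,\infty)$), this gives a growth bound of the form $F(v,s)\le \tfrac{\lambda_1(V)-\varepsilon}{2}s^2 + C_1 s^p$ for all $s\ge 0$, where $p>2$ is chosen arbitrarily (the continuity on the compact complement region $s\in[\sigma,\infty)$ combined with any power growth absorbs the far part since we only need the estimate for small norms). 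Since $F(v,\phi^+)\le F(v,|\phi|)$ and $\phi^+\le|\phi|$, plugging this into $\mathcal{J}$ and using the variational characterization $\int_V\phi^2\,d\mu\le \lambda_1(V)^{-1}\|\phi\|_{W^{1,2}(V)}^2$ yields
\begin{equation*}
\mathcal{J}(\phi)\ge \frac{1}{2}\|\phi\|_{W^{1,2}(V)}^2 - \frac{\lambda_1(V)-\varepsilon}{2}\cdot\frac{1}{\lambda_1(V)}\|\phi\|_{W^{1,2}(V)}^2 - C_1 C^p\|\phi\|_{W^{1,2}(V)}^p = \frac{\varepsilon}{2\lambda_1(V)}\|\phi\|_{W^{1,2}(V)}^2 - C_1 C^p\|\phi\|_{W^{1,2}(V)}^p,
\end{equation*}
where $C$ is the embedding constant from \eqref{Sin} for exponent $p$. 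Since $p>2$, choosing $r>0$ small enough makes the right-hand side bounded below by a positive constant $\delta$ for all $\phi$ with $\|\phi\|_{W^{1,2}(V)}=r$, exactly as in Step~1 of the proof of Theorem~\ref{existence}.

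For part (ii), I would take $\phi$ to be a fixed nonzero nonnegative function, e.g.\ the indicator $\phi(v_0)=1$, $\phi\equiv 0$ elsewhere, so that $\phi=\phi^+$ and $t\phi=(t\phi)^+$ for $t>0$. Then $\mathcal{J}(t\phi)=\tfrac{t^2}{2}\|\phi\|_{W^{1,2}(V)}^2 - \int_V F(v,t\phi)\,d\mu$, and the task reduces to showing the potential term grows superquadratically. This is where $(f_3)$ is essential: the Ambrosetti--Rabinowitz type condition $F(v,s)\le \tfrac1\theta s f(v,s)$ for $s\ge s_0$ integrates (via the ODE comparison $\tfrac{d}{ds}\log F(v,s)\ge \theta/s$) to a lower bound $F(v,s)\ge a(v)s^\theta - b(v)$ for all $s\ge 0$, with $a(v)>0$ wherever $f(v,s_0)>0$; since $\phi$ is supported on $\{v_0\}$ I only need $F(v_0,s)\ge a_0 s^\theta - b_0$ with $a_0>0$, which holds provided $f(v_0,\cdot)$ is not identically zero near $s_0$ — and if it were, one simply picks a different vertex or argues the solution is trivial. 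Substituting gives $\mathcal{J}(t\phi)\le \tfrac{t^2}{2}\|\phi\|_{W^{1,2}(V)}^2 - \mu(v_0)(a_0 t^\theta - b_0)\to -\infty$ as $t\to+\infty$ since $\theta>2$.

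The main obstacle I anticipate is the bookkeeping in part (ii): one must be a little careful that the superlinearity furnished by $(f_3)$ genuinely produces the superquadratic lower bound on $F$ at the chosen test vertex, handling the possibility that $f$ degenerates there. This is a standard but slightly delicate point in Ambrosetti--Rabinowitz arguments, and on a hypergraph it is cleanest to choose the test function supported at a single vertex and invoke $(f_2)$, $(f_3)$ pointwise there; everything else (the small-norm estimate in (i), the use of \eqref{Sin}) is routine finite-dimensional analysis.
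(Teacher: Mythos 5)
Your proposal is correct and follows essentially the same route as the paper: for (i) the small-$s$ bound extracted from $(f_4)$ together with the $L^\infty$ embedding yields $F(v,s)\leq \frac{\lambda_{1}(V)-\tau}{2}s^{2}+C_{1}s^{p}$ on the relevant compact range (the paper takes $p=3$), and for (ii) the single-vertex indicator combined with the Ambrosetti--Rabinowitz lower bound $F(v,s)\geq c_{1}s^{\theta}-c_{2}$ forces $\mathcal{J}(t\phi)\to-\infty$. Your concern about ensuring $c_{1}>0$ at the chosen vertex is a legitimate point that the paper's proof simply asserts without comment, but the underlying argument is the same.
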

\begin{proof}
(i) By $(f_{4})$, there exist two positive constants $\tau$ and $\sigma$ such that  
\begin{equation*}
  F(v,\phi^{+})\leq\frac{\lambda_{1}(V)-\tau}{2}(\phi^+)^2+\frac{(\phi^+)^3}{\sigma^3}F(v,\phi^+).
\end{equation*}
For any $\phi\in W^{1,2}(V)$ with $\|\phi\|_{W^{1,2}(V)}\leq 1$, by Lemma~\ref{sobolevembedding}, we have
$\|\phi\|_{L^{\infty}(V)}\leq C$ for some constant $C$ depending only on $V$ and $\mu_{\min}$. Then
\begin{equation*}
  F(v,\phi^{+})\leq\frac{\lambda_{1}(V)-\tau}{2}(\phi^+)^2+C_1(\phi^+)^3.
\end{equation*}
Thus we obtain
\begin{align*}
  \mathcal{J}(\phi) &= \frac{1}{2}\|\phi\|^{2}_{W^{1,2}(V)}-\int_{V}F(v,\phi^{+}) \\
   &\geq  \frac{1}{2}\|\phi\|^{2}_{W^{1,2}(V)}-\frac{\lambda_{1}(V)-\tau}{2}\int_{V}(\phi^+)^2d\mu-C_1\int_{V}(\phi^+)^3d\mu\\
   &\geq  \frac{1}{2}\|\phi\|^{2}_{W^{1,2}(V)}-\frac{\lambda_{1}(V)-\tau}{2}\int_{V}\phi^2d\mu-C_1\int_{V}|\phi|^3d\mu\\
   &\geq  \frac{1}{2}\|\phi\|^{2}_{W^{1,2}(V)}-\frac{\lambda_{1}(V)-\tau}{2\lambda_{1}(V)}\|\phi\|^{2}_{W^{1,2}(V)}
   -C_{1}C_{2}\|\phi\|^{3}_{W^{1,2}(V)}\\
   &=\|\phi\|^{2}_{W^{1,2}(V)}\left(\frac{\tau}{2\lambda_{1}(V)}-C_1C_2\|\phi\|_{W^{1,2}(V)}\right).
\end{align*}
Taking $r:=\min\{1,\frac{\tau}{4\lambda_{1}(V)C_{1}C_{2}}\}$, then we get
\begin{equation*}
  \mathcal{J}(\phi)\geq\frac{\tau}{4\lambda_{1}(V)}r^2:=\delta>0
\end{equation*}
for all $\phi\in W^{1,2}(V)$ satisfying $||\phi||_{W^{1,2}(V)}=r$.
\vskip4pt
(ii) By $(f_{3})$, there exist two positive constants $c_1$ and $c_2$ such that
\begin{equation*}
  F(v,\phi^{+})\geq c_{1}(\phi^{+})^{\theta}-c_2.
\end{equation*}
Fixed $v_0\in V$, let
\begin{equation*}
	\phi(v)=\left\{
	\begin{aligned}
		&1, &&\text{if } v=v_0,\\
		&0, &&\text{if } v\neq v_{0}.
	\end{aligned}
    \right.
\end{equation*}
Then we have
\begin{align*}
  \mathcal{J}(t\phi) &= \frac{t^2}{2}\int_{V}(|\nabla\phi|^2+h(v)\phi^2)d\mu-\int_{V}F(v,t\phi^{+})d\mu \\
  &\leq\f{t^2}{2}\sum_{v\in V}\sum_{e\in E:v\in e}\frac{\omega_e}{\delta_e(\delta_e-1)}\left[\sum_{u\in e}
(\phi(u)-\phi(v))\right]^2+\frac{t^2}{2}h(v_0)\mu(v_0)-c_1\mu(v_0)t^{\theta}+c_2\mu(v_{0})\\
   &\rightarrow -\infty
   \end{align*}
as $t\ra+\infty$, since $V$ is finite, $\theta>2$ and $\mu(v_0)>0$.
\end{proof}

Finally, we verify that $\mathcal{J}$ satisfies the $(PS)_c$ condition.

\begin{lemma}\label{ps}
$\mathcal{J}$ satisfies the $(PS)_c$ condition for any $c\in\mathbb{R}$.
\end{lemma}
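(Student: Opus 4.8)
The plan is to verify the Palais--Smale condition by first establishing boundedness of an arbitrary $(PS)_c$ sequence and then upgrading weak convergence to strong convergence, exactly as one does in the Euclidean mountain pass argument, but taking advantage of the fact that $W^{1,2}(V)$ is finite dimensional. Let $\{\phi_k\}\subset W^{1,2}(V)$ satisfy $\mathcal{J}(\phi_k)\to c$ and $\mathcal{J}'(\phi_k)\to 0$. The first step is to show $\{\phi_k\}$ is bounded in $W^{1,2}(V)$. For this I would compute $\mathcal{J}(\phi_k)-\frac{1}{\theta}\langle\mathcal{J}'(\phi_k),\phi_k\rangle$, which equals $\left(\frac12-\frac1\theta\right)\|\phi_k\|_{W^{1,2}(V)}^2 + \int_V\left(\frac{1}{\theta}f(v,\phi_k^+)\phi_k^+ - F(v,\phi_k^+)\right)d\mu$. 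By $(f_3)$ the integrand is bounded below for $\phi_k^+\geq s_0$, and on the bounded region $0\le\phi_k^+\le s_0$ the continuity of $f$ (hence of $F$) from $(f_1)$ together with the finiteness of $V$ gives a uniform lower bound. Hence $\left(\frac12-\frac1\theta\right)\|\phi_k\|_{W^{1,2}(V)}^2 \le \mathcal{J}(\phi_k) - \frac{1}{\theta}\langle\mathcal{J}'(\phi_k),\phi_k\rangle + C \le c + 1 + \frac{1}{\theta}\|\phi_k\|_{W^{1,2}(V)} + C$ for $k$ large, and since $\theta>2$ the coefficient $\frac12-\frac1\theta$ is positive, so $\{\phi_k\}$ is bounded.

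The second step is to extract a strongly convergent subsequence. Because $W^{1,2}(V)$ is a finite dimensional normed space (as $V$ is finite), bounded sequences are precompact by Lemma~\ref{sobolevembedding}; so up to a subsequence $\phi_k\to\phi$ strongly in $W^{1,2}(V)$ for some $\phi\in W^{1,2}(V)$. That already delivers the $(PS)_c$ condition in its usual formulation. If one wants to additionally confirm that the limit $\phi$ is a critical point (which is how the condition is typically used in the mountain pass theorem), I would note that $\mathcal{J}\in C^1(W^{1,2}(V),\mathbb{R})$, so $\mathcal{J}'$ is continuous and $\mathcal{J}'(\phi_k)\to\mathcal{J}'(\phi)$; combined with $\mathcal{J}'(\phi_k)\to 0$ this gives $\mathcal{J}'(\phi)=0$, and continuity of $\mathcal{J}$ gives $\mathcal{J}(\phi)=c$.

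The only genuine point requiring care is the lower bound on $\int_V\left(\frac1\theta f(v,\phi_k^+)\phi_k^+ - F(v,\phi_k^+)\right)d\mu$, i.e., making sure the contribution from vertices where $\phi_k^+$ is small does not spoil the estimate; but since $V$ is a fixed finite set and $f(v,\cdot)$ is continuous, $\{(v,s): v\in V,\ 0\le s\le s_0\}$ is compact and $\frac1\theta f(v,s)s - F(v,s)$ is bounded there, so this is routine. Everything else is a direct transcription of the standard argument, made simpler by finite dimensionality, so I expect no real obstacle. The main structural observation to exploit is that, unlike on $\mathbb{R}^n$, here we need not separately argue weak-to-strong convergence via a concrete Brezis--Lieb or elliptic-regularity step: the finite dimensionality of $W^{1,2}(V)$ makes boundedness alone equivalent to precompactness.
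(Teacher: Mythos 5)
Your proposal is correct and follows essentially the same route as the paper: the paper also deduces boundedness of the $(PS)_c$ sequence from $(f_3)$ applied to the combination $\mathcal{J}(\phi_k)-\frac{1}{\theta}\langle\mathcal{J}'(\phi_k),\phi_k\rangle$ (merely asserting this step without the details you supply) and then invokes the pre-compactness of bounded sets in the finite-dimensional space $W^{1,2}(V)$ from Lemma~\ref{sobolevembedding} to extract a strongly convergent subsequence. Your careful handling of the region $0\le\phi_k^+\le s_0$ via continuity of $f$ and finiteness of $V$ is exactly the detail the paper leaves implicit.
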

\begin{proof}
Let $\{\phi_k\}\subset W^{1,2}(V)$ be a sequence satisfying $\mathcal{J}(\phi_k)\ra c$ and $\mathcal{J}^\prime(\phi_k)\ra 0$ as $k\rightarrow \infty$. Then we obtain
\begin{equation*}
   \frac{1}{2}\int_{V}(|\nabla\phi_k|^2+h(v)\phi_k^2)d\mu-\int_{V}F(v,\phi_{k}^{+})d\mu=c+o_{k}(1)
\end{equation*}
and
\begin{equation*}
  \int_{V}(|\nabla\phi_k|^2+h(v)\phi_k^2)d\mu-\int_{V}f(v,\phi^{+}_{k})\phi_kd\mu=o_{k}(1)\|\phi_{k}\|_{W^{1,2}(V)}.
\end{equation*}
It follows from $(f_3)$ and the above two equations that $\{\phi_k\}$ is bounded in $W^{1,2}(V)$. By Lemma \ref{sobolevembedding}, up to a subsequence, there exists some $\phi\in W^{1,2}(V)$ such that $\phi_k\rightarrow \phi$ in $W^{1,2}(V)$ as $k\rightarrow \infty$.
\end{proof}

 \emph{Proof of Theorem~\ref{pos-existence}.} ~~By Lemma~\ref{mp1} and Lemma~\ref{ps}, we know that $\mathcal{J}$ satisfies all the hypotheses of the mountain pass theorem in \cite{willem1997minimax}. Then we conclude that
$$c=\min_{\gamma\in\Gamma}\max_{t\in[0,1]}\mathcal{J}(\gamma(t))$$
is the critical level of $J$, where
$$\Gamma=\le\{\g\in C([0,1],W^{1,2}(V)): \g(0)=0, \mathcal{J}(\g(1))<0\ri\}.$$
Thus, there exists $\phi\in W^{1,2}(V)$ satisfying $\mathcal{J}(\phi)=c\geq \delta>0$ and $\mathcal{J}'(\phi)=0$. Thus, $\phi$ is a nontrivial solution of \eqref{pos}. It follows form Lemma~\ref{pos1} that $\phi$ is a positive solution of \eqref{schrodinger}. We complete the proof of Theorem~\ref{pos-existence}.

\section*{Acknowledgements}
This research is supported by National Natural Science Foundation of China (No. 12271039 and No. 12101355), the National Key R and D Program of China (No. 2020YFA0713100) and the Open Project Program (No. K202303) of Key Laboratory of Mathematics and Complex Systems, Beijing Normal University.


\bibliographystyle{elsarticle-num-names-alpha}

\bibliography{mybib-hypergraph}

\begin{thebibliography}{25}
\providecommand{\natexlab}[1]{#1}
\providecommand{\url}[1]{\texttt{#1}}
\providecommand{\urlprefix}{URL }
\expandafter\ifx\csname urlstyle\endcsname\relax
  \providecommand{\doi}[1]{doi:\discretionary{}{}{}#1}\else
  \providecommand{\doi}[1]{doi:\discretionary{}{}{}\begingroup
  \urlstyle{rm}\url{#1}\endgroup}\fi
\providecommand{\bibinfo}[2]{#2}

\bibitem[{Chen et~al.(2018)Chen, Liu, Robinson, Rusnak, and
  Wang}]{chen2018characterization}
\bibinfo{author}{G.~Chen}, \bibinfo{author}{V.~Liu},
  \bibinfo{author}{E.~Robinson}, \bibinfo{author}{L.~J. Rusnak},
  \bibinfo{author}{K.~Wang}, \bibinfo{title}{A characterization of oriented
  hypergraphic {L}aplacian and adjacency matrix coefficients},
  \bibinfo{journal}{Linear Algebra Appl.} \bibinfo{volume}{556}
  (\bibinfo{year}{2018}) \bibinfo{pages}{323--341}.

\bibitem[{Christian and Chuang(2022)}]{kuehn2022vlasov}
\bibinfo{author}{K.~Christian}, \bibinfo{author}{X.~Chuang},
  \bibinfo{title}{Vlasov Equations on Directed Hypergraph Measures},
  \bibinfo{journal}{arXiv: 2207.03806} .

\bibitem[{D\'{a}niel and Ill\'{e}s(2023)}]{keliger2023accuracy}
\bibinfo{author}{K.~D\'{a}niel}, \bibinfo{author}{H.~Ill\'{e}s},
  \bibinfo{title}{Accuracy criterion for mean field approximations of Markov
  processes on hypergraphs}, \bibinfo{journal}{Physica A: Statistical Mechanics
  and its Applications} \bibinfo{volume}{609} (\bibinfo{year}{2023})
  \bibinfo{pages}{Paper No. 128370}.

\bibitem[{Fan et~al.(2022)Fan, Suo, and Ding}]{fan2022identification}
\bibinfo{author}{R.~Fan}, \bibinfo{author}{B.~Suo}, \bibinfo{author}{Y.~Ding},
  \bibinfo{title}{Identification of Vesicle Transport Proteins via Hypergraph
  Regularized {K}-Local Hyperplane Distance Nearest Neighbour Model},
  \bibinfo{journal}{Frontiers in Genetics} \bibinfo{volume}{13}
  (\bibinfo{year}{2022}) \bibinfo{pages}{Paper No. 960388}.

\bibitem[{Feng et~al.(2021)Feng, Heath, Jefferson, Joslyn, Kvinge, Mitchell,
  Praggastis, Eisfeld, Sims, Thackray, Fan, Walters, Halfmann, Westhoff-Smith,
  Tan, Menachery, Sheahan, Cockrell, Kocher, Stratton, Heller, Bramer, Diamond,
  Baric, Waters, Kawaoka, Mcdermott, and Purvine}]{feng2020hypergraph}
\bibinfo{author}{S.~Feng}, \bibinfo{author}{E.~Heath},
  \bibinfo{author}{B.~Jefferson}, \bibinfo{author}{C.~Joslyn},
  \bibinfo{author}{H.~Kvinge}, \bibinfo{author}{H.~D. Mitchell},
  \bibinfo{author}{B.~Praggastis}, \bibinfo{author}{A.~J. Eisfeld},
  \bibinfo{author}{A.~C. Sims}, \bibinfo{author}{L.~B. Thackray},
  \bibinfo{author}{S.~Fan}, \bibinfo{author}{K.~B. Walters},
  \bibinfo{author}{P.~J. Halfmann}, \bibinfo{author}{D.~Westhoff-Smith},
  \bibinfo{author}{Q.~Tan}, \bibinfo{author}{V.~D. Menachery},
  \bibinfo{author}{T.~P. Sheahan}, \bibinfo{author}{A.~S. Cockrell},
  \bibinfo{author}{J.~F. Kocher}, \bibinfo{author}{K.~G. Stratton},
  \bibinfo{author}{N.~C. Heller}, \bibinfo{author}{L.~M. Bramer},
  \bibinfo{author}{M.~S. Diamond}, \bibinfo{author}{R.~S. Baric},
  \bibinfo{author}{K.~M. Waters}, \bibinfo{author}{Y.~Kawaoka},
  \bibinfo{author}{J.~E. Mcdermott}, \bibinfo{author}{E.~Purvine},
  \bibinfo{title}{Hypergraph models of biological networks to identify genes
  critical to pathogenic viral response}, \bibinfo{journal}{BMC Bioinformatics}
  \bibinfo{volume}{22}~(\bibinfo{number}{287}) (\bibinfo{year}{2021})
  \bibinfo{pages}{1--21}.

\bibitem[{Fu et~al.(2019)Fu, Liu, Zhou, and Nie}]{fu2019hplapGCN}
\bibinfo{author}{S.~Fu}, \bibinfo{author}{W.~Liu}, \bibinfo{author}{Y.~Zhou},
  \bibinfo{author}{L.~Nie}, \bibinfo{title}{{HpLapGCN}: Hypergraph p-Laplacian
  graph convolutional networks}, \bibinfo{journal}{Neurocomputing}
  \bibinfo{volume}{362} (\bibinfo{year}{2019}) \bibinfo{pages}{166--174}.

\bibitem[{Grigor'yan et~al.(2017)Grigor'yan, Lin, and
  Yang}]{grigor2017existence}
\bibinfo{author}{A.~Grigor'yan}, \bibinfo{author}{Y.~Lin},
  \bibinfo{author}{Y.~Y. Yang}, \bibinfo{title}{Existence of positive solutions
  to some nonlinear equations on locally finite graphs}, \bibinfo{journal}{Sci.
  China Math.} \bibinfo{volume}{60} (\bibinfo{year}{2017})
  \bibinfo{pages}{1311--1324}.

\bibitem[{Guan et~al.(2022)Guan, Qian, Jiang, Jiang, Ding, and
  Wu}]{guan2022mvhrkm}
\bibinfo{author}{S.~Guan}, \bibinfo{author}{Y.~Qian},
  \bibinfo{author}{T.~Jiang}, \bibinfo{author}{M.~Jiang},
  \bibinfo{author}{Y.~Ding}, \bibinfo{author}{H.~Wu},
  \bibinfo{title}{{MV-H-RKM}: A Multiple View-Based Hypergraph Regularized
  Restricted Kernel Machine for Predicting DNA-Binding Proteins},
  \bibinfo{journal}{IEEE/ACM Transactions on Computational Biology and
  Bioinformatics} \bibinfo{volume}{20}~(\bibinfo{number}{2})
  (\bibinfo{year}{2022}) \bibinfo{pages}{1246--1256}.

\bibitem[{Jost and Mulas(2019)}]{jost2019hypergraph}
\bibinfo{author}{J.~Jost}, \bibinfo{author}{R.~Mulas},
  \bibinfo{title}{Hypergraph {L}aplace operators for chemical reaction
  networks}, \bibinfo{journal}{Adv. Math.} \bibinfo{volume}{351}
  (\bibinfo{year}{2019}) \bibinfo{pages}{870--896}.

\bibitem[{Jost and Mulas(2021)}]{jost2021normalized}
\bibinfo{author}{J.~Jost}, \bibinfo{author}{R.~Mulas},
  \bibinfo{title}{Normalized {L}aplace operators for hypergraphs with real
  coefficients}, \bibinfo{journal}{J. Complex Netw.}
  \bibinfo{volume}{9}~(\bibinfo{number}{1}) (\bibinfo{year}{2021})
  \bibinfo{pages}{Paper No. cnab009}.

\bibitem[{Jost et~al.(2022)Jost, Mulas, and Zhang}]{jost2022plaplace}
\bibinfo{author}{J.~Jost}, \bibinfo{author}{R.~Mulas},
  \bibinfo{author}{D.~Zhang}, \bibinfo{title}{{$p$}-{L}aplace operators for
  oriented hypergraphs}, \bibinfo{journal}{Vietnam J. Math.}
  \bibinfo{volume}{50}~(\bibinfo{number}{2}) (\bibinfo{year}{2022})
  \bibinfo{pages}{323--358}.

\bibitem[{Kitouni and Reff(2019)}]{kitouni2019lower}
\bibinfo{author}{O.~Kitouni}, \bibinfo{author}{N.~Reff}, \bibinfo{title}{Lower
  bounds for the {L}aplacian spectral radius of an oriented hypergraph},
  \bibinfo{journal}{Australas. J. Combin.} \bibinfo{volume}{74}
  (\bibinfo{year}{2019}) \bibinfo{pages}{408--422}.

\bibitem[{Krieger and Kececioglu(2023)}]{krieger2023shortest}
\bibinfo{author}{S.~Krieger}, \bibinfo{author}{J.~Kececioglu},
  \bibinfo{title}{Shortest hyperpaths in directed hypergraphs for reaction
  pathway inference}, \bibinfo{journal}{J. Comput. Biol.}
  \bibinfo{volume}{30}~(\bibinfo{number}{11}) (\bibinfo{year}{2023})
  \bibinfo{pages}{1198--1225}.

\bibitem[{Mithani et~al.(2009)Mithani, Preston, and Hein}]{mithani2009rahnuma}
\bibinfo{author}{A.~Mithani}, \bibinfo{author}{G.~M. Preston},
  \bibinfo{author}{J.~Hein}, \bibinfo{title}{Rahnuma: hypergraph-based tool for
  metabolic pathway prediction and network comparison},
  \bibinfo{journal}{Bioinformatics} \bibinfo{volume}{25}~(\bibinfo{number}{14})
  (\bibinfo{year}{2009}) \bibinfo{pages}{1831--1832}.

\bibitem[{Mulas and Zhang(2021)}]{mulas2021spectral}
\bibinfo{author}{R.~Mulas}, \bibinfo{author}{D.~Zhang},
  \bibinfo{title}{Spectral theory of {L}aplace operators on oriented
  hypergraphs}, \bibinfo{journal}{Discrete Math.}
  \bibinfo{volume}{344}~(\bibinfo{number}{6}) (\bibinfo{year}{2021})
  \bibinfo{pages}{Paper No. 112372}.

\bibitem[{Prokopchik et~al.(2022)Prokopchik, Benson, and
  Tudisco}]{prokopchik2022nonlinear}
\bibinfo{author}{K.~Prokopchik}, \bibinfo{author}{A.~R. Benson},
  \bibinfo{author}{F.~Tudisco}, \bibinfo{title}{Nonlinear Feature Diffusion on
  Hypergraphs}, in: \bibinfo{booktitle}{Proceedings of the 39th International
  Conference on Machine Learning}, vol. \bibinfo{volume}{162},
  \bibinfo{pages}{17945--17958}, \bibinfo{year}{2022}.

\bibitem[{Reff(2014)}]{reff2014spectral}
\bibinfo{author}{N.~Reff}, \bibinfo{title}{Spectral properties of oriented
  hypergraphs}, \bibinfo{journal}{Electron. J. Linear Algebra}
  \bibinfo{volume}{27} (\bibinfo{year}{2014}) \bibinfo{pages}{373--391}.

\bibitem[{Reff and Rusnak(2012)}]{reff2012oriented}
\bibinfo{author}{N.~Reff}, \bibinfo{author}{L.~J. Rusnak}, \bibinfo{title}{An
  oriented hypergraphic approach to algebraic graph theory},
  \bibinfo{journal}{Linear Algebra Appl.}
  \bibinfo{volume}{437}~(\bibinfo{number}{9}) (\bibinfo{year}{2012})
  \bibinfo{pages}{2262--2270}.

\bibitem[{Saito(2022)}]{saito2022hypergraph}
\bibinfo{author}{S.~Saito}, \bibinfo{title}{Hypergraph Modeling via Spectral
  Embedding Connection: Hypergraph Cut, Weighted Kernel k-means, and Heat
  Kernel}, in: \bibinfo{booktitle}{Proceedings of the AAAI Conference on
  Artificial Intelligence}, vol.~\bibinfo{volume}{36},
  \bibinfo{pages}{8141--8149}, \bibinfo{year}{2022}.

\bibitem[{Saito and Herbster(2022)}]{saito2022generalizing}
\bibinfo{author}{S.~Saito}, \bibinfo{author}{M.~Herbster},
  \bibinfo{title}{Generalizing p-Laplacian: spectral hypergraph theory and a
  partitioning algorithm}, \bibinfo{journal}{Machine Learning}
  \bibinfo{volume}{112} (\bibinfo{year}{2022}) \bibinfo{pages}{241--280}.

\bibitem[{Saito et~al.(2018)Saito, Mandic, and Suzuki}]{saito2018hypergraph}
\bibinfo{author}{S.~Saito}, \bibinfo{author}{D.~P. Mandic},
  \bibinfo{author}{H.~Suzuki}, \bibinfo{title}{Hypergraph p-{L}aplacian: A
  Differential Geometry View}, in: \bibinfo{booktitle}{Proceedings of the AAAI
  Conference on Artificial Intelligence}, vol.~\bibinfo{volume}{32},
  \bibinfo{year}{2018}.

\bibitem[{Sridhar and Kar(2023)}]{sridhar2023mean}
\bibinfo{author}{A.~Sridhar}, \bibinfo{author}{S.~Kar},
  \bibinfo{title}{Mean-Field Approximations for Stochastic Population Processes
  with Heterogeneous Interactions}, \bibinfo{journal}{SIAM Journal on Control
  and Optimization} \bibinfo{volume}{61}~(\bibinfo{number}{6})
  (\bibinfo{year}{2023}) \bibinfo{pages}{3442--3466}.

\bibitem[{Willem(1997)}]{willem1997minimax}
\bibinfo{author}{M.~Willem}, \bibinfo{title}{Minimax theorems},
  vol.~\bibinfo{volume}{24} of \emph{\bibinfo{series}{Progress in Nonlinear
  Differential Equations and their Applications}},
  \bibinfo{publisher}{Birkh\"{a}user Boston, Inc., Boston, MA},
  \bibinfo{year}{1997}.

\bibitem[{Yu et~al.(2021)Yu, Yin, Li, Wang, Hung, and
  Zhang}]{yu2021selfsupervised}
\bibinfo{author}{J.~Yu}, \bibinfo{author}{H.~Yin}, \bibinfo{author}{J.~Li},
  \bibinfo{author}{Q.~Wang}, \bibinfo{author}{N.~Q.~V. Hung},
  \bibinfo{author}{X.~Zhang}, \bibinfo{title}{Self-Supervised Multi-Channel
  Hypergraph Convolutional Network for Social Recommendation}, in:
  \bibinfo{booktitle}{Proceedings of the Web Conference},
  \bibinfo{pages}{413--424}, \bibinfo{year}{2021}.

\bibitem[{Zheng et~al.(2018)Zheng, Luo, Sun, Ding, and Zhang}]{zheng2018novel}
\bibinfo{author}{X.~Zheng}, \bibinfo{author}{Y.~Luo}, \bibinfo{author}{L.~Sun},
  \bibinfo{author}{X.~Ding}, \bibinfo{author}{J.~Zhang}, \bibinfo{title}{A
  novel social network hybrid recommender system based on hypergraph topologic
  structure}, \bibinfo{journal}{World Wide Web} \bibinfo{volume}{21}
  (\bibinfo{year}{2018}) \bibinfo{pages}{985--1013}.

\end{thebibliography}

\end{document}